\theoremstyle{remark}
\newtheorem{para}{\bf}[section]
\newtheorem{rmk}[para]{\bf Remark}
\theoremstyle{definition}
\newtheorem{exam}[para]{\bf Example}
\newtheorem{dfn}[para]{\bf Definition}
\theoremstyle{plain}
\newtheorem{thm}[para]{\bf Theorem}
\newtheorem{lemma}[para]{\bf Lemma}
\newtheorem{prop}[para]{\bf Proposition}
\newenvironment{numequation}
{\addtocounter{enumi}{1}\begin{equation}}{\end{equation}}
\newcommand{\cA}{{\mathcal A}}
\newcommand{\cD}{{\mathcal D}}
\newcommand{\cE}{{\mathcal E}}
\newcommand{\cF}{{\mathcal F}}
\newcommand{\cM}{{\mathcal M}}
\newcommand{\cO}{{\mathcal O}}
\newcommand{\cU}{{\mathcal U}}
\newcommand{\cX}{{\mathcal X}}
\newcommand{\cY}{{\mathcal Y}}
\newcommand{\bbC}{{\mathbb C}}
\newcommand{\bbF}{{\mathbb F}}
\newcommand{\bbG}{{\mathbb G}}
\newcommand{\bbN}{{\mathbb N}}
\newcommand{\bbP}{{\mathbb P}}
\newcommand{\bbZ}{{\mathbb Z}}
\newcommand{\frb}{{\mathfrak b}}
\newcommand{\frd}{{\mathfrak d}}
\newcommand{\frg}{{\mathfrak g}}
\newcommand{\frh}{{\mathfrak h}}
\newcommand{\frl}{{\mathfrak l}}
\newcommand{\frp}{{\mathfrak p}}
\newcommand{\frq}{{\mathfrak q}}
\newcommand{\frt}{{\mathfrak t}}
\newcommand{\fru}{{\mathfrak u}}
\newcommand{\frz}{{\mathfrak z}}
\newcommand{\GL}{{\rm GL}}
\newcommand{\Hom}{{\rm Hom}}
\newcommand{\Ad}{{\rm Ad}}
\newcommand{\hra}{\hookrightarrow}
\newcommand{\ind}{{\rm ind}}
\newcommand{\Ind}{{\rm Ind}}
\newcommand{\lra}{\longrightarrow}
\newcommand{\midc}{\;|\;}
\newcommand{\ra}{\rightarrow}
\newcommand{\Rep}{{\rm Rep}}
\newcommand{\sub}{\subset}
\newcommand{\alg}{{\rm alg}}
\newcommand{\Pf}{{\it Proof. }}
\renewcommand{\qed}{{\hfill{\space} $\Box$}}
\numberwithin{equation}{section}
\newcommand{\N}{\mathbb N}
\newcommand{\Z}{\mathbb Z}
\renewcommand{\P}{\mathbb P}
\newcommand{\dUg}{\dot{\cU}(\frg)}
\newcommand{\dUb}{\dot{\cU}(\frb)}
\newcommand{\dUp}{\dot{\cU}(\frp)}
\newcommand{\dUq}{\dot{\cU}(\frq)}
\newcommand{\dUu}{\dot{\cU}(\fru)}
\newcommand{\dUt}{\dot{\cU}(\frt)}
\newcommand{\dUup}{\dot{\cU}(\fru_P)}
\newcommand{\dUump}{\dot{\cU}(\fru^-_P)}
\newcommand{\dUl}{\dot{\cU}(\frl_P)}
\newcommand{\dUum}{\dot{\cU}(\fru^-)}
\newcommand{\dcO}{\dot{\cO}}
\begin{document}

\title{Equivariant vector bundles on Drinfeld's halfspace over a finite field}

\author{Sascha Orlik}
\address{Fachgruppe Mathematik und Informatik, Bergische  Universit\"at Wuppertal, Gau\ss{}stra\ss{}e
D-42119 Wuppertal,  Germany}
\email{orlik@math.uni-wuppertal.de}
\date{}
\maketitle

\begin{abstract}
Let $\cX \subset \P_k^d$ be Drinfeld's halfspace over a finite field $k$ and let $\cE$ be a  homogeneous vector bundle on $\P_k^d$.  
The paper deals with two different descriptions of the space of global sections $H^0(\cX,\cE)$ as $\GL_{d+1}(k)$-representation. This is an infinite dimensional modular $G$-representation. Here we follow the ideas of \cite{O2,OS} treating the $p$-adic case. As a replacement for the universal enveloping algebra we consider both the crystalline universal enveloping algebra and the ring of differential operators on the flag variety with respect to $\cE.$

\end{abstract}

\normalsize

\section*{Introduction}
Let $k$ be a finite field and denote by $\cX$ Drinfeld's halfspace of dimension $d\geq1$ over $k$. This is the complement of all  $k$-rational hyperplanes in projective space $\mathbb \P_k^d,$ i.e.,
$$ \cX=\mathbb \P_k^d\setminus \bigcup\nolimits_{H \varsubsetneq k^{d+1}}\mathbb \P(H).$$ 
This object  is equipped with an action of $G={\rm GL}_{d+1}(k)$ and can be viewed as a Deligne-Lusztig variety, as well as a period domain over a finite field \cite{OR}.
In particular we get for every homogeneous vector bundle $\cE$ on $\P^d_k$ an induced action of $G$ on the space of global sections $H^0(\cX,\cE)$ which is an infinite-dimensional modular $G$-representation. 

In \cite{O2} we considered the same problem for the Drinfeld halfspace over a $p$-adic field $K$. We constructed for every homogeneous vector bundle $\cE$  a filtration
by closed ${\rm GL}_{d+1}(K)$-subspaces and determined the graded pieces in terms of locally analytic $G$-representations in the sense of Schneider and Teitelbaum \cite{ST1}. The definition of the filtration above involves the geometry of $\cX$ being the complement of an hyperplane arrangement.
In the $p$-adic case $H^0(\cX, \cE)$ is a "bigger" object, it is a reflexive $K$-Fr\'echet space with a continuous $G$-action. Its strong dual  is a locally analytic $G$-representation. The interest here for studying those objects lies in the connection to the $p$-adic Langland correspondence.

In his thesis \cite{Ku} Kuschkowitz adapts the strategy of the $p$-adic case to the situation considered here.

{\it \noindent {\bf Theorem} (Kuschkowitz):} Let $\cE$ be a homogeneous vector bundle on $\P^d_k.$  There is a filtration $$\cE(\cX)^0 \supset \cE(\cX)^{1} \supset \cdots \supset \cE(\cX)^{d-1} \supset \cE(\cX)^d =  H^0(\P^d,\cE)$$
on $\cE(\cX)^0=H^0(\cX,\cE)$ such that for $j=0,\ldots,d-1,$ there is an
extension of $G$-representa\-tions
$$0 \rightarrow   \Ind^G_{P_{(j+1,d-j)}}(\tilde{H}^{d-j}_{\bbP^j}(\bbP^d,\cE)\otimes St_{d+1-j}) \rightarrow \cE(\cX)^{j}/\cE(X)^{j+1} \rightarrow v^{G}_{P_{(j+1,1,\ldots,1)}} \otimes H^{d-j}(\P^d_k,\cE) \rightarrow 0. $$ 

\noindent Here the module $v^{G}_{{P_{(j+1,1,\ldots,1)}}}$ is a generalized Steinberg representation corresponding to the decomposition $(j+1,1,\ldots,1)$ of $d+1.$
Further $P_{\underline{j}}=P_{(j,d+1-j)}\subset G$ is the (lower) standard-parabolic  subgroup attached to the decomposition $(j,d+1-j)$ of $d+1$ and ${\rm St}_{d+1-j}$ is the Steinberg representation of ${\rm GL}_{d+1-j}(k).$
Here the action of the parabolic is induced by the composite 
$$ P_{(j,d+1-j)} \to L_{(j,d+1-j)}= {\rm GL}_j(k) \times {\rm GL}_{d+1-j}(k) \to {\rm GL}_{d+1-j}(k).$$ 
Finally we have the reduced local cohomology
$$\tilde{H}^{d-j}_{\P^j_k} (\P^d_k ,\cE):={\rm ker}\,\Big(H^{d-j}_{\P^j_k}(\P^d_k,\cE) \rightarrow H^{d-j}(\P^d_k,\cE)\Big)$$
which is a $P_{(j+1,d-j)}$-module.

In  the $p$-adic setting the substitute of the LHS of this extension has the structure of an admissible module over the locally analytic distribution algebra. Here we were able to give a description of the dual representation in terms of a series of functors
$$\cF^G_P:\cO^\frp_\alg \times \Rep^{\infty,{\it adm}}_K(P) \to \Rep^{\ell a}_K(G)$$
where $\cO^\frp_\alg$ consist of the algebraic objects of type $\frp$ in the category $\cO$, $\Rep^{\infty,{\it adm}}_K(P)$
is the category of smooth admissible $P$-representations and 
$\Rep^{\ell a}_K(G)$ denotes the category of locally analytic $G$-representations.

In positive characteristic Lie algebra methods do not behave so nice. E.g. the local cohomology groups are  not finitely generated over the universal enveloping algebra of the Lie algebra of $\GL_{d+1}$ so that the same  machinery does not apply. Our goal in this paper is to concentrate on the latter aspect and to present two candidates for a substitution in this situation. The first approach considers the crystalline universal enveloping algebra $\dUg$ (or Kostant form) which coincides with the distribution algebra of $G$, cf. \cite{Ja}. The action of $\frg$ extends  to one of $\dUg$, so that 
$H^0(\cX,\cE)$ becomes a module over the smash product $k[G] \# \dUg.$ We define a positive characteristic version of $\cF^G_P$ and prove analogously properties of them as in the $p$-adic case, e.g. we give an irreducibility criterion, cf. \cite{OS}. 

The second approach uses instead of $\dUg$ the ring of distributions $D^\cE$ on the flag variety with respect to $\cE.$
The important point is that the natural map $\dUg \to D^\cE$ is in contrast to the field of complex numbers not surjective as shown  by Smith \cite{Sm}.
We will show that the above local cohomology modules are finitely generated leading to a category $\cO_{D^\cE}$ where we can define similar our functors $\cF^G_P$.

\vspace{0.7cm}
{\it Notation:} We let  $p$ be a prime number, $q=p^n$ some power and let $k=\bbF_q$ the corresponding field with $q$ elements. We fix an algebraic closure $\bbF:=\overline{\bbF}_q$ and denote by $\P^d_\bbF$ the projective space of dimension $d$ over $\bbF.$ If $Y \subset \P^d_\bbF$ is a closed algebraic $\bbF$-subvariety and $\cF$ is a sheaf on $\P^d_\bbF$ we write $H_Y^\ast(\P^d_\bbF,\cF)$ for the corresponding local cohomology.  We  consider the algebraic action ${\bf G} \times \P^d_\bbF \rightarrow \P^d_\bbF$ of ${\bf G}$ on $\P_\bbF^d$ given by
$$g\cdot [q_0:\cdots :q_d]:=m(g,[q_0:\cdots :q_d]):= [q_0:\cdots :q_d]g^{-1}.$$
 
We use bold letters $\bf H$ to denote algebraic group schemes over $\bbF_q$, whereas we use normal letters $H$ for their $\bbF_q$-valued points. We denote by ${\bf H}_\bbF:= {\bf H} \times_{\bbF_q} \bbF$  their base change to $\bbF.$
We use Gothic letters $\frh$ for their Lie algebras (over $\bbF$). The corresponding enveloping algebras are denoted as usual by $U(\frh).$

We denote by ${\bf G}_{\bbZ}$ a split reductive algebraic group over $\bbZ$. We fix a Borel subgroup ${\bf B_{\bbZ} \subset G_{\bbZ}}$ and let  ${\bf U_{\bbZ}}$ be its unipotent radical and ${\bf U^-_{\bbZ}}$ the opposite radical. Let ${\bf T_{\bbZ}\subset G_{\bbZ}}$ be a fixed split torus 
and denote the root system by $\Phi$ and its subset of simple roots by 
$\Delta$.

\vspace{0.7cm}
{\it Acknowledgments:} 
I am grateful to Georg Linden for pointing out to me the paper of Smith \cite{Sm}.
This research was conducted in the framework of the research training group
\emph{GRK 2240: Algebro-Geometric Methods in Algebra, Arithmetic and Topology}, which is funded by the DFG.

\section{The theorem of Kuschkowitz}
In this section we recall shortly the strategy for proving the theorem  of Kuschkowitz. Here we consider for ${\bf G}$ the general linear group ${\rm {\bf GL_{d+1}}}$ and for 
${\bf B\subset G}$ the Borel subgroup of lower triangular matrices and ${\bf T}$ the diagonal torus. Denote by  ${\bf
\overline{T}}$ its image in ${\rm \bf PGL}_{d+1}.$ For $0 \leq i \leq
d,$ let $\epsilon_i:{\bf T}\rightarrow {\bf \bbG_m}$ be the
character defined by $\epsilon_i({\rm diag}(t_1,\ldots,t_d))= t_i.$
Put $\alpha_{i,j}:=\epsilon_i - \epsilon_j$ for $i\neq j$. Then $\Delta:=\{\alpha_{i,i+1} \,\mid\, 0\leq i \leq d-1\}$ are the simple roots
and $\Phi:=\{\alpha_{i,j}\,\mid\, 0\leq i\neq j \leq d-1\}$ are
the roots of ${\bf G}$ with respect to ${\bf T\subset B}$. For a
decomposition $(i_1,\ldots,i_r)$ of $d+1,$ let ${\bf
P_{(i_1,\ldots,i_r)}}$ be the corresponding standard-parabolic
subgroup of ${\bf G}$, ${\bf U_{(i_1,\ldots,i_r)}}$ its unipotent radical and ${\bf
L_{(i_1,\ldots,i_r)}}$ its Levi component.

\bigskip
Let $\cE$ be a homogeneous  vector bundle  on $\P_\bbF^d$. Our finite  group $G$ stabilizes $\cX$. Therefore, we obtain an
induced action of $G$ on the $\bbF$-vector space of global sections $\cE(\cX).$ Further $\cE$ is  naturally a $\frg$-module, i.e., there is a homomorphism of Lie algebras
$\frg \rightarrow {\rm End}(\cE)$.
For the structure sheaf $\cO=\cO_{\P^d_\bbF}$ with its natural ${\bf G}$-linearization we can describe the action of $\frg$ on $\cO(\cX).$ Indeed, for a root $\alpha=\alpha_{i,j}\in \Phi,$ let
$$L_\alpha :=L_{(i,j)} \in \frg_\alpha$$ be the standard generator of the
weight space $\frg_\alpha$  in $\frg.$ Let $\mu \in
X^\ast(\overline{\bf T})$ be a character of the torus $\overline{\bf
T}.$ Write $\mu$ in the shape $\mu=\sum_{i=0}^d m_i \epsilon_i$ with
$\sum_{i=0}^d m_i=0.$ Define $\Xi_{\mu} \in \cO(\cX)$  by
$$\Xi_\mu(q_0,\ldots,q_d)=q_0^{m_0}\cdots q_d^{m_d}.$$
For these functions, the action  of $\frg$  is given by
\begin{equation}\label{Strukturgarbe}
L_{(i,j)}\cdot \Xi_\mu = m_j\cdot \Xi_{\mu + \alpha_{i,j}} 
\end{equation}
and
\begin{equation*}
t\cdot \Xi_\mu = (\sum\nolimits_ i m_i t_i)\cdot\Xi_\mu, \; t\in \frt.
\end{equation*}

Fix an integer $0 \leq j \leq d-1.$
Let $$\P^j_\bbF=V(X_{j+1},\ldots,X_{d})\subset \P^d_\bbF$$ be the closed subvariety
defined by the vanishing of the coordinates $X_{j+1},\ldots,X_{d}.$ The algebraic local cohomology modules $H^i_{\P^j_\bbF}(\P^d_\bbF,\cE), \; i\in \N$,
sit in a long exact sequence
$$\cdots \rightarrow H^{i-1}(\P^d_\bbF\setminus \P^j_\bbF,\cF) \rightarrow H^i_{\P^j_\bbF}(\P^d_\bbF,\cF) \rightarrow H^i(\P^d_\bbF,\cF) \rightarrow H^i(\P^d_\bbF\setminus \P^j_\bbF,\cF) \rightarrow \cdots$$
which is equivariant for the induced action of ${\bf P_{(j+1,d-j)}} \ltimes U(\frg)$. Here the semi-direct product is defined via the adjoint action of ${\bf P_{(j+1,d-j)}}$ on $\frg.$    We set
$$\tilde{H}^{d-j}_{\P^j_\bbF} (\P^d_\bbF ,\cE):={\rm ker}\,\Big(H^{d-j}_{\P^j_\bbF}(\P^d_\bbF,\cE) \rightarrow H^{d-j}(\P^d_\bbF,\cE)\Big)$$
which is consequently a ${\bf P_{(j+1,d-j)}} \ltimes U(\frg)$-module.

Consider the   exact sequence
of $\bbF$-vector spaces with $G$-action
$$0\rightarrow H^0(\P^d_\bbF,\cE) \rightarrow H^0(\cX,\cE)\rightarrow H^1_{\cY}(\P^d_\bbF,\cE) \rightarrow H^1(\P^d_\bbF,\cE)\rightarrow 0.$$
Note that the higher cohomology groups $H^i(\cX,\cE), \; i>0,$ vanish since $\cX$ is an affine space.
The $G$-representations $H^0(\P^d_\bbF,\cF),\;H^1(\P^d_\bbF,\cF)$ are finite-dimensional algebraic. Let $i: \cY \hookrightarrow
(\P^d_\bbF)$ denote the closed embedding and let $\bbZ$ be constant sheaf on $\cY.$  Then by \cite[Proposition 2.3
bis.]{SGA2}, we conclude that
$${\rm Ext}^\ast(i_\ast(\Z_{{\cY}}),\cE) =
H^\ast_{{\cY}}(\P^d_\bbF,\cE).$$
The idea is now to plug in a resolution of the sheaf $\bbZ$
on the boundary and works as follows.

Let $\{e_0,\ldots,e_d\}$ be the standard basis of $V=\bbF^{d+1}.$
For any $\alpha_i\in \Delta,$ put
$$V_i=\bigoplus^i_{j=0} \bbF\cdot e_j \; \mbox{ and } \; Y_i=\P(V_i)$$
For any subset $I\subset \Delta$ with  $\Delta\setminus I=\{\alpha_{i_1} < \ldots < \alpha_{i_r}\},$ set $Y_I=\P(V_{i_1})$ and consider it as a closed subvariety of $\P^d_\bbF$.
Furthermore, let $P_I$ be the lower parabolic subgroup of $G,$ such that  $I$
coincides with the simple roots appearing in the Levi factor of $P_I$. Hence the group $P_I$ stabilizes $Y_I.$
We obtain
\begin{eqnarray}
\cY=\bigcup_{g\in G}g\cdot Y_{ \Delta \setminus \{\alpha_{d-1}\}}.
\end{eqnarray}
Consider the natural closed embeddings 
$$\Phi_{g,I} :gY_I \longrightarrow {\cY}$$
and put
$$ \Z_{g,I}:=(\Phi_{g,I})_\ast(\Phi_{g,I}^\ast\, \Z).$$

We obtain the following  complex of sheaves on ${\cY}:$
\begin{eqnarray}
\nonumber 0 \rightarrow \Z \rightarrow\!\!\! \bigoplus_{I \subset
\Delta \atop |\Delta\setminus I|=1} \bigoplus_{g\in G/P_I} \Z_{g,I}
\rightarrow \!\!\!\bigoplus_{I \subset \Delta \atop |\Delta\setminus I|=2}
\bigoplus_{g\in G/P_I} \Z_{g,I} \rightarrow \cdots \rightarrow \!\!\!\bigoplus_{I \subset \Delta \atop |\Delta\setminus I|=i}
\bigoplus_{g\in G/P_I} \Z_{g,I} \rightarrow \cdots \\ \label{complex} \\ \nonumber \cdots\rightarrow
\!\!\!\bigoplus_{I \subset \Delta \atop |\Delta\setminus I|=d-1} \bigoplus_{g\in G/P_I} \Z_{g,I}
\rightarrow \bigoplus_{g\in G/P_\emptyset} \Z_{g,\emptyset} \rightarrow 0.
\end{eqnarray}
which is  acyclic by \cite{O1}.

In a next step one considers the spectral sequence which is induced by this complex  applied to ${\rm
Ext}^\ast(i_\ast(-),\; \cE).$ Here one uses that for all $I\subset \Delta,$ there is an isomorphism
$${\rm Ext}^\ast(i_\ast(\bigoplus\limits_{g\in G/P_I} \Z_{g,I}), \cE)
= \bigoplus_{g\in G/P_I} H_{gY_I}^{\ast}(\P^d_\bbF, \cE).$$

By evaluating the spectral sequence Kuschkowitz arrives in \cite{Ku} at the theorem mentioned in the introduction.

\section{First approach}
In this section we replace $U(\frg)$ by its crystalline version and transform the results of \cite{OS} to this setting. 

Let ${\bf G_{\bbZ}}$ be a split reductive algebraic group over $\bbZ$ and let  $\frg_\bbC$ be the Lie algebra of ${\bf G_{\bbZ}}(\bbC).$
On the other hand let $D(\textbf{G}_{\bbF})$ be the distribution algebra of  $\textbf{G}_{\bbF}=\textbf{G}_{\bbZ} \times_{\bbZ} \bbF.$ We identify $D(\textbf{G}_\bbF)$ with the  universal crystalline enveloping algebra (Kostant form) $\dUg.$ 
Thus $\dUg=\dUg_\bbZ \otimes \bbF$ where $\dUg_\bbZ$ is the $\bbZ$-subalgebra of $U(\frg_\bbC)$ generated by the expressions $$ x^{[n]}_\alpha:=x_\alpha^n/n! \mbox{, } y_\alpha^{[n]}:=y_\alpha^n/n! ,\; \alpha \in \Phi^+, n\in \bbN$$ 
$$\mbox{ and } {h_\alpha \choose n} ,\;\alpha \in \Delta, n\in \bbN ,$$
where $x_\alpha \in \frg_\alpha, y_\alpha \in \frg_{-\alpha}$ are generators and $h_\alpha=[x_\alpha,y_\alpha]$ for all $\alpha \in \Delta$.
We have a PBW-decomposition 
$$\dUg= \dUu \otimes_{\bbF} \dUt \otimes_{\bbF} \dUum$$
where the crystalline enveloping algebras for $\fru,\fru^-$ and $\frt$ are defined analogously. 

 We mimic the definition of the category $\cO$ in the sense of BGG.

 \begin{dfn}
  Let  $\dot{\cO}$ be the full subcategory of all  $\dUg$-modules such that 

i) $M$ is finitely generated as $\dUg$-module 

ii) $\dUt$  acts semisimple with finite-dimensional weight spaces.

iii) $\dUu$ acts locally finite-dimensional, i.e., for all $m\in M$ we have $\dim \dUu\cdot m < \infty.$
 \end{dfn}

 \begin{rmk}
  In \cite[Def. 3.2]{Hab} Haboush calls $\dUg$-modules  satisfying properties i) and ii) admissible. The category $\dot{\cO}$ has been also recently considered by Andersen \cite{An} and Fiebig \cite{Fi} (even  more generally for weight modules) discussing among others the structure of these objects.
 \end{rmk}

 Similarly, for a parabolic subgroup ${\bf P}\subset {\bf G}$ with Levi decomposition ${\bf P=L_P\cdot U_P}$ (induced by one over $\bbZ$), we let $\dcO^\frp$ be the full subcategory of $\dcO$ consisting of objects which are direct sums of finite-dimensional $\dUl$-modules. We let $\dcO_\alg$ be the full subcategory of $\dcO$ such that the action of $\dUt$ is induced on the weight spaces by  algebraic characters $X^\ast(T_\bbF) $ of $T_\bbF.$ Finally we set $$\dcO^\frp_\alg:= \dcO_\alg \cap \dcO^\frp.$$ As in the classical case there is for every object $M \in \dcO^\frp_\alg$  some finite-dimensional algebraic $P$-representation\footnote{Meaning that we restrict an algebraic ${\bf P}$-representation to the its rational points $P$.} $W\subset M$ which generates $M$ as a $\dUg$-module, i.e., there is a surjective homomorphism $\dUg \otimes_{\dUp} W \to M$. Again there is 
a PBW-decomposition $\dUg= \dUup \otimes_{\bbF} \dUl \otimes_{\bbF} \dUump$ such that the latter homomorphism can be seen as a map  $\dUump \otimes_{\bbF} W \to M.$

According to \cite{Hab} there is the notion of maximal vectors, highest weights, highest weight module etc.
and we may define Verma modules, cf. Def. 3.1 in loc.cit.  In fact let $\lambda$ be a one-dimensional $\dUt$-module. Then we consider it as usual via the trivial $\dUu$-action as a one-dimensional $\dUb$-module $\bbF_\lambda$. Then
$$M(\lambda)=\dUg \otimes_{\dUb} \bbF_\lambda$$
is the attached Verma module of weight $\lambda.$
As in the classical case  Theorem of \cite[1.2]{Hu} holds true for our highest weight modules. In particular it has a unique maximal proper submodule and therefore a unique simple quotient $L(\lambda)$, cf. \cite[Prop. 4.4]{Hab}, \cite[Thm 2.3]{An}, \cite[Prop. 2.3]{Fi}.

\begin{prop}
The simple modules in $\dcO_\alg$ are exactly of the shape $L(\lambda)$ for $\lambda \in X^\ast({\bf T}_\bbF).$
\end{prop}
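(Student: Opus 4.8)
The plan is to mirror the classical BGG argument. First I would show that every object $M\in\dcO_\alg$ admits a maximal vector: pick any nonzero weight vector, and use property (iii) — the local finiteness of the $\dUu$-action — to find inside the (finite-dimensional) subspace $\dUu\cdot m$ a vector of weight maximal with respect to the dominance order, which is then killed by $\dUu$. By the discussion above (following \cite{Hab}, \cite{An}, \cite{Fi}) such a maximal vector of weight $\lambda\in X^\ast({\bf T}_\bbF)$ generates a highest weight submodule, which has a unique simple quotient $L(\lambda)$. Consequently every simple $M\in\dcO_\alg$ is a quotient of some highest weight module $M_\lambda$, hence equals $L(\lambda)$ for some algebraic weight $\lambda$; the weight is algebraic precisely because $M\in\dcO_\alg$ forces $\dUt$ to act through $X^\ast(T_\bbF)$.

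Next I would check the converse, namely that $L(\lambda)$ for $\lambda\in X^\ast({\bf T}_\bbF)$ really lies in $\dcO_\alg$: it is finitely generated (indeed cyclic) over $\dUg$ by construction, $\dUt$ acts semisimply with finite-dimensional weight spaces and through algebraic characters since the weights of $M(\lambda)$ all lie in $\lambda-\bbN\Delta\subset X^\ast(T_\bbF)$ and the weight multiplicities are bounded by those of $\dUump$, and $\dUu$ acts locally finitely because on $M(\lambda)=\dUump\otimes\bbF_\lambda$ raising operators shift into a finite range of dominant weights. Thus $L(\lambda)\in\dcO_\alg$. One should also note $L(\lambda)\ncong L(\mu)$ for $\lambda\neq\mu$, since the highest weight is an invariant of the module.

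The main obstacle is the first half: producing a maximal vector and knowing that the highest weight submodule it generates has a simple head. In the classical case this is immediate from $U(\frg)=U(\fru^-)U(\frt)U(\fru)$ and finiteness, but here one must invoke the crystalline PBW-decomposition $\dUg=\dUum\otimes\dUt\otimes\dUu$ together with the divided-power structure, and cite the analogue of \cite[1.2]{Hu} for the Kostant form, which is exactly \cite[Prop. 4.4]{Hab}, \cite[Thm 2.3]{An}, \cite[Prop. 2.3]{Fi} as already recorded above. Given these inputs the proof is short; the only genuinely new verification beyond the cited structure theory is that the algebraicity condition in the definition of $\dcO_\alg$ is preserved under passing to subquotients, which follows because weights of subquotients are weights of $M$ and the latter lie in $X^\ast(T_\bbF)$ by hypothesis.
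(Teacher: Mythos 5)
Your proof is correct and follows essentially the same route as the paper: the paper disposes of the main direction by citing Haboush's classification of simple admissible highest weight modules (\cite[Thm 4.9 i)]{Hab}) together with the observation that algebraicity forces $\lambda \in X^\ast({\bf T}_\bbF)$, while you unpack exactly this step by producing a maximal vector from axiom (iii) and invoking the unique-simple-quotient property of highest weight modules already recorded before the proposition. Your extra verification that each $L(\lambda)$ with $\lambda \in X^\ast({\bf T}_\bbF)$ indeed lies in $\dcO_\alg$ (cyclic, weights in $\lambda - \bbN\Delta$ with finite multiplicities, local finiteness of the divided-power raising operators) is the converse inclusion that the paper treats as implicit, and it is sound.
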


\begin{proof}
 We need to show that every simple object in $\dcO_\alg$ is of this form. But by  \cite[Thm 4.9 i)]{Hab} simple admissible highest weight modules are of the form $L(\lambda)$ for a 
 one-dimensional $\dUt$-module $\lambda$. The algebraic condition forces $\lambda$ to be an algebraic character $\lambda \in X^\ast({\bf T}_\bbF)$.
\end{proof}

We also consider the full subcategory $M^d_{\dUg}$ for all $\dUg$-modules which satisfy condition ii) in the definition of $\dcO.$ For any such object $M$ we define a dual object $M'$ (the graded dual) following the classical concept: consider the weight space decomposition  $M=\bigoplus_\lambda M_\lambda$ where $\lambda$ is as above a one-dimensional $\dUt$-module.  Then the underlying vector space of $M'$ is the direct sum $\bigoplus_\lambda \Hom(M_\lambda,K).$ The $\dUg$-structure on it  is given by the natural one\footnote{Without the composition with the Cartan involution.}. Clearly one has $(M')'=M.$ 

We consider the natural action of $\fru_P^-$ on $\cO({\bf U_{P^-,\bbF}})$. This extends to a  non-degenerate pairing 
\begin{eqnarray}\label{pairin}
 \dUump \otimes \cO({\bf U^-_{P,\bbF}}) & \to & \bbF
\end{eqnarray}
such that $\cO({\bf U_{P^-,\bbF}})$ identifies with the graded dual of $\dUump.$ Moreover we pull back via this identification the action of $P$ on $(\dUg \otimes_{\dUp} 1)'$ to $\cO({\bf U_{P^-,\bbF}})$.  By construction we obtain the following statement.

\begin{lemma}
There is an isomorphism of $P \ltimes \dUg$-modules
$\cO({\bf U^-_{P,\bbF}}) \cong (\dUg \otimes_{\dUp} 1)'.$ \qed
\end{lemma}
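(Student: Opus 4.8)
The plan is to assemble the lemma from the identifications set up just above its statement, so that only one genuine compatibility needs to be verified; as the text indicates, the claim is essentially one of transport of structure. First I would record the two defining isomorphisms. By the parabolic PBW decomposition, $\dUg$ is free as a right $\dUp$-module on the subspace $\dUump$, so that $u \mapsto u \otimes 1$ is an isomorphism $\dUump \xrightarrow{\sim} \dUg \otimes_{\dUp} 1$ of left $\dUump$-modules and of weight-graded vector spaces; in particular $\dUg \otimes_{\dUp} 1$ lies in $M^d_{\dUg}$, its weight spaces being the finite-dimensional ones of $\dUump$, so that its graded dual is defined and $(-)''$ is the identity on it. Moreover $\dUg \otimes_{\dUp} 1$ is naturally an object of the category of $P \ltimes \dUg$-modules — it is induced from the trivial $\dUp$-module carrying its (trivial) $P$-structure, and the $\ltimes$-compatibility $p \cdot (x \cdot m) = (\Ad(p)x)\cdot(p\cdot m)$ holds by construction — whence its graded dual $(\dUg \otimes_{\dUp} 1)'$ is again such an object, and dualizing the PBW isomorphism identifies it with the graded dual $(\dUump)'$.

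Next I would bring in the non-degenerate pairing (\ref{pairin}): it presents $\cO({\bf U}^-_{P,\bbF})$ as the graded dual of $\dUump$, the grading on $\cO({\bf U}^-_{P,\bbF})$ being the one induced by the conjugation action of $\mathbf{T}$, under which the weight spaces of $\cO({\bf U}^-_{P,\bbF})$ and of $\dUump$ are finite-dimensional and pair contragradiently. Composing with the dual of the PBW isomorphism yields an $\bbF$-linear isomorphism $\cO({\bf U}^-_{P,\bbF}) \cong (\dUg \otimes_{\dUp} 1)'$, along which — as the text prescribes — one transports the $P$-action and the full $\dUg$-action from the right-hand side. With these transported structures the isomorphism is $P \ltimes \dUg$-equivariant tautologically, so the one thing to confirm is that the transported structure is the intended one: that the restriction to $\dUump \subset \dUg$ of the transported $\dUg$-action on $\cO({\bf U}^-_{P,\bbF})$ is the natural action of $\fru_P^-$ on functions out of which the pairing was built. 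This is precisely the standard duality between the distribution algebra of the unipotent group ${\bf U}^-_{P,\bbF}$, acting on itself by left multiplication, and its coordinate ring with the natural differentiated-translation action; the antipode of the Hopf algebra $\dUg$ is exactly the ingredient that matches the dual action specified in the excerpt — the one taken without composition with the Cartan involution — to this function-theoretic action, and the match is checked by expanding left multiplication against the comultiplication of $\cO({\bf U}^-_{P,\bbF})$.

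The step I expect to be the main obstacle is this last compatibility: keeping straight the left-versus-right bookkeeping among the left-regular module structure on $\dUg$, the comultiplication on $\cO({\bf U}^-_{P,\bbF})$ and the antipode, and normalizing the $\mathbf{T}$-gradings on both sides so that the graded dual is the weight-space-by-weight-space dual and the pairing (\ref{pairin}) is perfect. Once those conventions are fixed, everything else — finiteness of the weight spaces, $P$-equivariance of the PBW map, the passage of the $P \ltimes \dUg$-structure to graded duals, and the formal smash-product compatibility of the $P$- and $\dUg$-actions — is routine and runs parallel to the $p$-adic arguments of \cite{O2,OS}.
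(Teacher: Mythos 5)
Your argument is essentially the paper's own: the paper sets up exactly these ingredients (the PBW identification $\dUg\otimes_{\dUp}1\cong\dUump$, the non-degenerate pairing (\ref{pairin}) identifying $\cO({\bf U^-_{P,\bbF}})$ with the graded dual of $\dUump$, and the pullback of the $P$-action along this identification) and then records the lemma as holding by construction, which is the content of your proposal. Your extra discussion of the Hopf-algebra (antipode/comultiplication) bookkeeping is a correct elaboration of the compatibility the paper treats as immediate, so the proposal is fine and follows the same route.
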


The pairing (\ref{pairin}) extends for every algebraic $P$-representation $W$ to a pairing
\begin{eqnarray}\label{pairing}
 (\dUump \otimes W') \otimes (\cO({\bf U^-_{P,\bbF}}) \otimes W) & \to & \bbF
\end{eqnarray}
such that $\cO({\bf U^-_{P,\bbF}})\otimes W$ becomes isomorphic to $\dUump' \otimes W'$ as $P \ltimes \dUg$-modules.

Let $\dot{\bbF}[G,\frg]:=\bbF[G] \# \dUg$ be the smash product of $\dUg$ and the group algebra $\bbF[G]$ of $G.$ Recall that this $\bbF$-algebra has as underlying vector space the tensor product $\bbF[G] \otimes \dUg$ and where the multiplication is induced by $(g_1\otimes z_1)\cdot (g_2 \otimes z_2)=g_1g_2 \otimes Ad(g_2)(z_1)z_2$ for elements $g_i\in G, z_i \in \dUg, i=1,2.$

\vspace{0.5cm}

\begin{dfn}
 i) We denote by $Mod_{\dot{\bbF}[G,\frg]}^{d}$ be the full subcategory of all $\dot{\bbF}[G,\frg]$-modules for which the action of $\dUt$ is diagonalisable into finite-dimensional weight spaces.

 ii) We denote by $Mod_{\dot{\bbF}[G,\frg]}^{fg,d}$ be the full subcategory of $Mod_{\dot{\bbF}[G,\frg]}^{d}$ which are  finitely generated. 
\end{dfn}

For an object $\cM$ of $Mod_{\dot{\bbF}[G,\frg]}^d$ we define the dual  $\cM'$ as the graded dual of the underlying $\dUg$-module together with the contragradient action of $G.$

Let $M$ be an object of $\dcO^\frp_\alg$. Then there is a surjection 
$$p: \dUump \otimes W \to M  $$
for some finite-dimensional algebraic $P$-module $W.$ Let $\frd:=\ker(p)$ be its kernel. Then set 
$$\cF^G_P(M):=\Ind^G_P((\cO({\bf U^-_{P,\bbF}}) \otimes W)^\frd)$$
where $(\cO({\bf U^-_{P,\bbF}}) \otimes W)^\frd$ is the orthogonal complement of $\frd$ with respect to the pairing (\ref{pairing}). The latter submodule can be interpreted  as the graded dual of $M$. In particular we get 
$$\cF^G_P(M)'= \Ind^G_P(M).$$

\begin{lemma}
 Let $M$ be an object of $\dcO^\frp_\alg$. Then  $\cF^G_P(M)$ is an object of the category $Mod_{\dot{\bbF}[G,\frg]}^{d}$. Its dual $\cF^G_P(M)'$ is an object of the category $Mod_{\dot{\bbF}[G,\frg]}^{fg,d}$.
\end{lemma}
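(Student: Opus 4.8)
The plan is to verify the three structural conditions, working on $\cF^G_P(M)$ first and then deducing the dual statement. First I would record that $\cF^G_P(M)=\Ind^G_P\big((\cO({\bf U^-_{P,\bbF}})\otimes W)^\frd\big)$ carries a $\dot\bbF[G,\frg]$-action: the $G$-action is the one coming from the induction, and the $\dUg$-action is inherited from the $\dUg$-action on $\cO({\bf U^-_{P,\bbF}})\otimes W\cong \dUump'\otimes W'$ provided by the pairing (\ref{pairing}), since $\frd$ is a $\dUp$-submodule (it is the kernel of the $\dUg$-linear surjection $p$, restricted appropriately) so its orthogonal complement is $\dUg$-stable; the compatibility $(g_1\otimes z_1)(g_2\otimes z_2)=g_1g_2\otimes\Ad(g_2)(z_1)z_2$ is exactly the semidirect-product relation already present on $\Ind^G_P$ of a $P\ltimes\dUg$-module, which is where the hypothesis ``$P\ltimes\dUg$-module'' in the preceding lemmas is used.

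Next, for membership in $Mod^d_{\dot\bbF[G,\frg]}$ I need that $\dUt$ acts diagonalisably with finite-dimensional weight spaces on $\cF^G_P(M)$. The key point is that the orthogonal complement $(\cO({\bf U^-_{P,\bbF}})\otimes W)^\frd$ is, as the lemma states, the graded dual $M'$ of $M$; since $M\in\dcO^\frp_\alg$ has finite-dimensional weight spaces and semisimple $\dUt$-action, so does $M'$. Inducing from $P$ to the finite group $G$ only tensors up by the finite-dimensional space $\bbF[G/P]$, so each $\dUt$-weight space of $\cF^G_P(M)$ is a finite direct sum of (twists of) weight spaces of $M'$ — here one uses that $\Ad(g)$ permutes the finitely many cosets and carries $\dUt$-weight spaces to weight spaces for a conjugate torus, but after decomposing $\Ind^G_P$ as a $\dUg$-module via the usual $\bigoplus_{g\in G/P} g\otimes M'$ description the $\dUt$-weights that occur are still a locally finite set with finite multiplicities. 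Hence $\cF^G_P(M)\in Mod^d_{\dot\bbF[G,\frg]}$.

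For the dual, I would invoke the identity $\cF^G_P(M)'=\Ind^G_P(M)$ already established in the excerpt, together with the definition of $\cM'$ as the graded dual with contragredient $G$-action; then it suffices to check $\Ind^G_P(M)\in Mod^{fg,d}_{\dot\bbF[G,\frg]}$. Condition ii) (diagonalisable $\dUt$ with finite-dimensional weight spaces) for $\Ind^G_P(M)$ follows just as above from the same property of $M$ and finiteness of $G/P$. Finite generation is the substantive point: $M\in\dcO^\frp_\alg$ is generated over $\dUg$ by the finite-dimensional algebraic $P$-representation $W$ (equivalently by the image of $W$ under $p$), so $\Ind^G_P(M)$ is generated over $\dot\bbF[G,\frg]=\bbF[G]\#\dUg$ by the single copy $1\otimes W$: applying $\bbF[G]$ spreads $W$ over all cosets $g\otimes M$, and then applying $\dUg$ (more precisely $\dUump$ via the PBW decomposition $\dUg=\dUup\otimes\dUl\otimes\dUump$ and the fact that $\dUl\cdot W\subset W$ up to the $\dUt$-weights, $\dUup\cdot W$ stays finite since $W$ generates $M$ with $M\in\dcO^\frp$) fills out each $g\otimes M$. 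Since $W$ is finite-dimensional this exhibits $\Ind^G_P(M)$ as finitely generated, so $\cF^G_P(M)'\in Mod^{fg,d}_{\dot\bbF[G,\frg]}$.

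The main obstacle I anticipate is the bookkeeping in the second paragraph: making precise that after pushing the $\dUt$-weight decomposition through $\Ind^G_P$ — where the conjugation action $\Ad(g)$ moves $\dUt$ off the diagonal — one still obtains a genuine weight decomposition for the \emph{fixed} torus $\dUt$ with finite-dimensional spaces, rather than merely for the various conjugates $\Ad(g)\dUt$. This is handled by noting that $\Ind^G_P(M)\cong\bigoplus_{g\in G/P} g\otimes M$ only as $\bbF$-vector spaces (and as $\dUp\cap{}^g\!\dUp$-ish modules), and that the full $\dUt$-action is reconstructed through the semidirect relation; concretely one checks that a vector $g\otimes m$ with $m$ of weight $\mu$ generates, under $\dUg$, only vectors whose $\dUt$-weights lie in $g\cdot\mu + \bbZ\Phi$ intersected with the weights actually appearing, and each such weight space is finite-dimensional because only finitely many cosets and finitely many bounded-below weights of $M$ contribute. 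Everything else reduces to the already-cited results of Haboush, Andersen, Fiebig and the explicit pairing constructions (\ref{pairin})--(\ref{pairing}) in the text.
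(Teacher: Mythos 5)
Your setup, the reduction to the dual via $\cF^G_P(M)'=\Ind^G_P(M)$, and the finite-generation argument (finitely many cosets, $M$ generated over $\dUg$ by the finite-dimensional $W$) are all fine and agree with the paper. The genuine gap is precisely at the point you yourself flag as ``the main obstacle'': you never prove that $\dUt$ acts semisimply with finite-dimensional weight spaces on a twisted summand $\delta_g\star M$ for a general coset representative $g\in G/P$. For such a $g$ the element $t\in\dUt$ acts on $\delta_g\star M$ through $\Ad(g^{-1})(t)$, which does not lie in $\dUt$, so a weight vector of $M$ is in general not a $\dUt$-weight vector of $\delta_g\star M$, and it is not clear a priori that $\dUt$ acts diagonalisably there at all. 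Your sentence about ``vectors whose $\dUt$-weights lie in $g\cdot\mu+\bbZ\Phi$'' presupposes exactly the weight decomposition whose existence is in question (and $g\cdot\mu$ is not a character of $T$ unless $g$ normalizes the torus); finiteness of $G/P$ controls the number of summands but contributes nothing to the diagonalisability of $\dUt$ on each one.

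The paper closes this gap by a specific choice of coset representatives coming from the Bruhat decomposition $G/P=\bigcup_{w\in W_P}U_{B,w}\,wP/P$ with $U_{B,w}=U\cap wU^-w^{-1}$: every representative has the form $uw$ with $w$ a Weyl group element and $u$ unipotent in $U$. Twisting by $w$ is harmless because $w$ normalizes the torus, so $\delta_w\star M$ is again a $\dUt$-weight module with weight spaces $M_{w\lambda}$. Twisting by $u$ is handled by the fact that for $M\in\dcO^\frp_\alg$ the locally finite $\dUu$-action with algebraic weights integrates to an action of the group $U$ on $M$; consequently $\delta_u\star M$ is identified with $M$ itself, its weight spaces being $u\cdot M_\lambda$, which are finite-dimensional. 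Combining the two cases gives the weight decomposition of each summand $\delta_{uw}\star M$, hence of $\cF^G_P(M)'$, and then of $\cF^G_P(M)$ by duality. Some argument of this kind --- a Bruhat-type normalization of representatives together with the integrated $U$-action on $M$ --- is needed to turn your second paragraph into a proof.
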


\begin{proof}
 It suffices to show the second assertion. As $G/P$ is a finite set, we need only to show that  $\cF^G_P(M)'$ has a decomposition into finite-dimensional weight spaces. Let 
 $M=\bigoplus_\lambda M_\lambda$. We write $\cF^G_P(M)=\bigoplus_{g\in G/P} \delta_g\star M$ where $\delta_{g} \star M$ is the $\dUg$-module with the same underlying vector space but where the Lie algebra action is twisted by $Ad(g)$.
  We consider the Bruhat decomposition $G/P =\bigcup_{w\in W_P} U_{B,w} w P/P$ where $U_{B,w}=U \cap wU^-w^{-1}$ and take the obvious representatives for $G/P.$ Thus we have 
  $$\cF^G_P(M)'=\bigoplus_{w \in W_P} \bigoplus_{u \in U_{B,w}^-}\delta_{uw} \star M.$$
    In the case of $\delta_w, w \in W,$ the grading
 of $\delta_w\star M$ is given by $\bigoplus_\lambda M_{w\lambda}$. In the case of $\delta_u, u\in U_{B,w}$ the grading is given by $\bigoplus_\lambda u\cdot M_\lambda$ (Note that we have an action of $U$ on $M$). In general we consider the mixture of these cases.
 \end{proof}

Let $V$ be additionally a finite-dimensional $P$-module. Then we set
$$\cF^G_P(M,V):=\Ind^G_P((\cO({\bf U^-_{P,\bbF}}) \otimes W')^\frd \otimes V).$$
This is an object of $Mod_{\dot{\bbF}[G,\frg]}^{d}$ by a slight generalization of the above lemma. In this way we get  a bi-functor
$$\cF^G_P: \dcO^\frp_\alg \times \Rep(P) \to Mod_{\dot{\bbF}[G,\frg]}^{d}.$$
By the following statement the dual $\cF^G_P(M,V)'$ is an object of $Mod_{\dot{\bbF}[G,\frg]}^{fg,d}.$

\begin{lemma}
 The dual  of $\cF^G_P(M,V)$ is given by
$$\cF^G_P(M,V)'=\dot{\bbF}[G,\frg] \otimes_{\dot{\bbF}[P,\frg]} (M \otimes V').$$
\end{lemma}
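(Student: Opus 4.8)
The plan is to dualize the definition of $\cF^G_P(M,V)$ directly, reducing the claim to the two structural facts already available in the excerpt: first, the identification of $(\cO({\bf U^-_{P,\bbF}}) \otimes W')^\frd$ with the graded dual of $M$ (compatibly with the $P \ltimes \dUg$-action), which is built into the construction via the pairing (\ref{pairing}) together with the preceding Lemma identifying $\cO({\bf U^-_{P,\bbF}})$ with $(\dUg \otimes_{\dUp} 1)'$; and second, the fact that forming the dual $(-)'$ of an object of $Mod^d_{\dot{\bbF}[G,\frg]}$ turns the coinduction/induction $\Ind^G_P$ over the finite coset space $G/P$ into an extension of scalars $\dot{\bbF}[G,\frg] \otimes_{\dot{\bbF}[P,\frg]} (-)$. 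The point of the two preceding lemmas was precisely to set this up, so I would keep the argument short.

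First I would recall that $\cF^G_P(M,V)$ has, as underlying vector space, a finite direct sum $\bigoplus_{g \in G/P} \delta_g \star \big((\cO({\bf U^-_{P,\bbF}}) \otimes W')^\frd \otimes V\big)$, exactly as in the proof of the earlier Lemma computing the weight spaces. Taking the graded dual $(-)'$ commutes with this finite direct sum, and on each summand it replaces $(\cO({\bf U^-_{P,\bbF}}) \otimes W')^\frd \otimes V$ by its graded dual. By the pairing (\ref{pairing}) the graded dual of $(\cO({\bf U^-_{P,\bbF}}) \otimes W')^\frd$ is canonically $M$ as a $P \ltimes \dUg$-module (this is the sentence "The latter submodule can be interpreted as the graded dual of $M$", together with $\cF^G_P(M)' = \Ind^G_P(M)$), and the graded dual of the finite-dimensional $P$-module $V$ is $V'$ (here $\dUg$ acts trivially). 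Hence each summand dualizes to $\delta_g \star (M \otimes V')$, with the $\dUg$-action twisted by $\Ad(g)$ and the $G$-action permuting the summands by left translation, possibly composed with the $P$-action on $M \otimes V'$ — i.e. precisely the standard model of $\Ind^G_P(M \otimes V')'$.

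Next I would identify this object with $\dot{\bbF}[G,\frg] \otimes_{\dot{\bbF}[P,\frg]} (M \otimes V')$. Since $G/P$ is finite, induction and coinduction agree, and the smash-product algebra $\dot{\bbF}[G,\frg] = \bbF[G] \# \dUg$ is free as a right $\dot{\bbF}[P,\frg] = \bbF[P]\#\dUg$-module with basis a set of coset representatives of $G/P$; thus $\dot{\bbF}[G,\frg] \otimes_{\dot{\bbF}[P,\frg]} (M \otimes V') = \bigoplus_{g \in G/P} \delta_g \otimes (M \otimes V')$, and the multiplication rule $(g_1 \otimes z_1)(g_2 \otimes z_2) = g_1 g_2 \otimes \Ad(g_2)(z_1) z_2$ shows that the summand $\delta_g \otimes (M\otimes V')$ carries the $\dUg$-action twisted by $\Ad(g)$, matching the description of $\delta_g \star (M\otimes V')$ above. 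The two $P \ltimes \dUg$-module structures therefore agree summand by summand, and the $G$-permutation actions agree, so the isomorphism follows. Finally, the fact that $\cF^G_P(M,V)'$ lies in $Mod^{fg,d}_{\dot{\bbF}[G,\frg]}$ is then immediate: condition (d) is the earlier weight-space computation, and finite generation follows because $M \in \dcO^\frp_\alg$ is finitely generated over $\dUg$ (being generated by the algebraic $P$-module $W$), $V'$ is finite-dimensional, and $\dot{\bbF}[G,\frg]$ is finite over $\dot{\bbF}[P,\frg]$.

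The main obstacle is bookkeeping rather than substance: one must check that under the chain of identifications the $G$-action on $\Ind^G_P$ dualizes correctly to the left-translation (permutation) action built into $\dot{\bbF}[G,\frg] \otimes_{\dot{\bbF}[P,\frg]}(-)$, and that the $\Ad(g)$-twist on the $\dUg$-part is consistent with the smash-product multiplication on both sides — i.e. that "contragredient of $\Ind$" really is "$\otimes$ over the smash product" and not a coinduction with a different twist. Since $G/P$ is finite this is a purely formal verification, and the only genuine input beyond it is the $P \ltimes \dUg$-equivariant identification of $(\cO({\bf U^-_{P,\bbF}}) \otimes W')^\frd$ with the graded dual of $M$, which is already established.
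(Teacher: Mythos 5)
Your argument is correct and is essentially the paper's own proof: the paper simply writes $\cF^G_P(M,V)'=\Ind^G_P(M'\otimes V)'=\Ind^G_P((M')'\otimes V')=\Ind^G_P(M\otimes V')$, using $(M')'=M$ and the finiteness of $G/P$, which is exactly your dualization of the definition. Your extra verification that $\Ind^G_P(M\otimes V')$ agrees with $\dot{\bbF}[G,\frg]\otimes_{\dot{\bbF}[P,\frg]}(M\otimes V')$ via freeness over coset representatives is a detail the paper leaves implicit, not a different route.
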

 
 \begin{proof}
We have $\cF^G_P(M,V)'=\Ind^G_P(M'\otimes V)'=
\Ind^G_P((M')'\otimes V')=\Ind^G_P(M\otimes V').$
 \end{proof}

\begin{prop}\label{exact} The functor $\cF^G_P$ is exact in both arguments.
\end{prop}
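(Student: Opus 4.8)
The plan is to reduce exactness of $\cF^G_P$ to exactness of two simpler operations: the passage to the graded dual on the relevant category, and the functor $\Ind^G_P$. First I would recall from the preceding lemma that $\cF^G_P(M,V)' = \Ind^G_P(M\otimes V')$, so it suffices to prove that $M\mapsto M'$ is exact on $\dcO^\frp_\alg$ (more precisely on the category $M^d_{\dUg}$, which contains all the objects in sight), that $\Ind^G_P$ is exact, and that the assignment $\cM \mapsto \cM'$ on $Mod^d_{\dot\bbF[G,\frg]}$ is exact; chaining these gives exactness of $\cF^G_P$ itself in both variables. The key point is that all three constituent operations are exact for elementary reasons.

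For the graded dual: an object of $M^d_{\dUg}$ decomposes as $M = \bigoplus_\lambda M_\lambda$ with each $M_\lambda$ finite-dimensional, and $M' = \bigoplus_\lambda \Hom(M_\lambda,\bbF)$. A short exact sequence $0\to M_1\to M_2\to M_3\to 0$ in this category is, weight space by weight space, a short exact sequence of finite-dimensional vector spaces (the weight space decomposition is exact because $\dUt$ acts semisimply), and dualizing a finite-dimensional vector space is exact; reassembling over $\lambda$ preserves the $\dUg$- and $G$-module structure since those are defined weight-component-wise via the contragredient action. Hence $M\mapsto M'$ is exact. The same argument verbatim handles $\cM\mapsto\cM'$ on $Mod^d_{\dot\bbF[G,\frg]}$. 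For $\Ind^G_P$: since $G/P$ is a \emph{finite} set, $\Ind^G_P(N)$ is just a finite direct sum of twisted copies $\bigoplus_{g\in G/P}\delta_g\star N$ as observed in the proof of the earlier lemma, and a finite direct sum of exact functors is exact; the $G$-action permutes the summands and the $\dUg$-action is twisted by $\Ad(g)$, neither of which affects exactness.

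Finally I would check that the tensor operations do not destroy exactness: $-\otimes V'$ and $-\otimes W'$ are exact because $V$ and $W$ are finite-dimensional over the field $\bbF$, so tensoring is exact over $\bbF$; and the formation of the orthogonal complement $(\cO(\bf U^-_{P,\bbF})\otimes W)^\frd$ with respect to the perfect pairing \eqref{pairing} corresponds, under the identification with the graded dual, exactly to the dual of the inclusion $\frd\hookrightarrow \dUump\otimes W$, which is again a weight-space-wise dual of finite-dimensional maps. Putting the pieces together: given a short exact sequence in $\dcO^\frp_\alg$ (resp. in $\Rep(P)$), apply $-\otimes V'$, then the graded dual, then $\Ind^G_P$ — each step exact — to conclude that $\cF^G_P$ sends it to a short exact sequence. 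I expect the only point requiring any care, rather than being completely routine, is making sure that a short exact sequence $0\to M_1\to M_2\to M_3\to 0$ within $\dcO^\frp_\alg$ really does remain a short exact sequence after applying $\cF^G_P$ at the level of the \emph{non-dual} objects, i.e. that $(\,\cdot\,)^\frd$ behaves functorially with respect to morphisms of the kernels $\frd$; but this follows formally once one has recorded that $(\cO(\bf U^-_{P,\bbF})\otimes W)^\frd \cong (\dUump\otimes W)/\frd)' \cong M'$ naturally in $M$, which is precisely the content of the identifications set up just before the statement.
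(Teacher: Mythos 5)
Your proposal is correct and follows essentially the same route as the paper's proof: exactness is reduced to the exactness of $\Ind^G_P$ (finite index) together with the exactness of the graded dual via the identification $\cF^G_P(M,V)'=\Ind^G_P(M\otimes V')$, plus exactness of tensoring with a finite-dimensional space over $\bbF$. You merely spell out in more detail (weight-space by weight-space) why the graded dual is exact, which the paper leaves implicit.
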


\noindent \Pf We start to prove that the functor is exact in the first argument. Let $0\rightarrow M_1\rightarrow M_2 \rightarrow M_3 \rightarrow 0$ be an exact sequence in the category $\cO_\alg^\frp.$ Then 
the sequence
$0\rightarrow \Ind^G_P M_1\rightarrow \Ind^G_P M_2 \rightarrow \Ind^G_P M_3 \rightarrow 0$
is also exact. But the graded dual of this sequence is exactly
$0\rightarrow \cF^G_P(M_3)\rightarrow \cF^G_P(M_2) \rightarrow \cF^G_P(M_1) \rightarrow 0.$

As for exactness in the second argument let 
$0\rightarrow V_1\rightarrow V_2 \rightarrow V_3 \rightarrow 0$ be an exact sequence of $P$-representations. As
$$\cF^G_P(M,V):=\Ind^G_P((\cO({\bf U^-_{P,\bbF}}) \otimes W')^\frd \otimes V_i)$$ and $\Ind^G_P$ is an exact functor we see easily the claim. \qed

Now let   ${\bf Q \supset P}$ be a parabolic subgroup and let $M \in \dcO^\frq_\alg$. Then we may consider it also as an object of 
$\dcO^\frp_\alg.$

\begin{prop}\label{PQ}
\noindent If ${\bf Q \supset P}$ is a parabolic subgroup,  $M$ an object of $\dcO^\frq_\alg$ and $V$ a finite-dimensional $P$-module, then
$$\cF^G_P(M,V) = \cF^G_Q(M,\Ind^{Q}_{P}(V)) \,.$$
\end{prop}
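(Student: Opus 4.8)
The plan is to dualize and reduce to the known behaviour of $\cF^G_P(M,V)'$ from the preceding lemma together with transitivity of ordinary (smooth, parabolic) induction. By that lemma we have
\[
\cF^G_P(M,V)' = \dot{\bbF}[G,\frg] \otimes_{\dot{\bbF}[P,\frg]} (M \otimes V')
\]
and likewise
\[
\cF^G_Q(M,\Ind^Q_P(V))' = \dot{\bbF}[G,\frg] \otimes_{\dot{\bbF}[Q,\frg]} \bigl(M \otimes \Ind^Q_P(V)'\bigr).
\]
Since dualizing on $Mod^{fg,d}_{\dot{\bbF}[G,\frg]}$ is an involution by construction, it suffices to produce a natural isomorphism of $\dot{\bbF}[G,\frg]$-modules between these two induced objects. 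Because $G/P$ and $G/Q$ are finite sets, the smash-product induction $\dot{\bbF}[G,\frg]\otimes_{\dot{\bbF}[P,\frg]}(-)$ is literally an induction of modules over finite-dimensional algebras, and the only point is to check that the composite $\dot{\bbF}[G,\frg]\otimes_{\dot{\bbF}[Q,\frg]}\dot{\bbF}[Q,\frg]\otimes_{\dot{\bbF}[P,\frg]}(-)$ collapses to $\dot{\bbF}[G,\frg]\otimes_{\dot{\bbF}[P,\frg]}(-)$; this is just associativity of tensor product, $\dot{\bbF}[G,\frg]\otimes_{\dot{\bbF}[Q,\frg]}\dot{\bbF}[Q,\frg]=\dot{\bbF}[G,\frg]$.

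First I would make precise that, on the dual side, $\Ind^Q_P(V)'$ carries the $\dot{\bbF}[Q,\frg]$-module structure for which $\dot{\bbF}[Q,\frg]\otimes_{\dot{\bbF}[P,\frg]}V'$ is its image under smash-induction from $P$ to $Q$; concretely, for a finite-dimensional algebraic $P$-representation $V$ the $\frg$-action on $\Ind^Q_P(V)$ is via $\frq\subset\frg$ and the $Q$-action is the usual one on induced representations, so that the identity $\Ind^Q_P(V)' = \dot{\bbF}[Q,\frg]\otimes_{\dot{\bbF}[P,\frg]}V'$ holds by the same argument as the preceding lemma, applied with $G$ replaced by $Q$. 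Granting this, the chain of identifications reads
\[
\cF^G_Q(M,\Ind^Q_P(V))' = \dot{\bbF}[G,\frg]\otimes_{\dot{\bbF}[Q,\frg]}\bigl(M\otimes(\dot{\bbF}[Q,\frg]\otimes_{\dot{\bbF}[P,\frg]}V')\bigr),
\]
and since $M$ is already a $\dot{\bbF}[Q,\frg]$-module one may absorb it: $M\otimes(\dot{\bbF}[Q,\frg]\otimes_{\dot{\bbF}[P,\frg]}V')\cong \dot{\bbF}[Q,\frg]\otimes_{\dot{\bbF}[P,\frg]}(M\otimes V')$ as $\dot{\bbF}[Q,\frg]$-modules, using that $M$ restricted to $P$ and then re-induced is a module summand compatibly (the projection formula for smash-product induction over the finite algebras involved). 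Plugging this in and using associativity of $\otimes$ gives
\[
\cF^G_Q(M,\Ind^Q_P(V))' = \dot{\bbF}[G,\frg]\otimes_{\dot{\bbF}[P,\frg]}(M\otimes V') = \cF^G_P(M,V)',
\]
and dualizing back yields the claim.

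The main obstacle I expect is the projection-formula step: verifying that $M\otimes\Ind^Q_P(V)'\cong\Ind^Q_P(M|_P\otimes V')$ as $\dot{\bbF}[Q,\frg]$-modules, i.e. that the diagonal $\frg$- and $Q$-actions interact correctly with the induction. This is where the compatibility between the adjoint action defining the smash product and the tensor-product action must be used carefully; over a field of characteristic zero with $U(\frg)$ this is routine, but here one should double-check that it goes through verbatim for the crystalline algebra $\dUg$ and that no divided-power subtleties intervene — they do not, since $\Ind^Q_P$ over the finite group $Q/P$ is just a finite direct sum of $P$-conjugates and the argument is purely formal. Alternatively, one can bypass the dual side entirely and argue directly that $\cF^G_P(M,V)$ and $\cF^G_Q(M,\Ind^Q_P(V))$ have the same underlying space $\Ind^G_P(\text{something})=\Ind^G_Q(\Ind^Q_P(\text{same thing}))$ by transitivity of $\Ind$, then match the $\dot{\bbF}[G,\frg]$-structures; I would present whichever is shorter, but the dual-side computation above is the cleaner route given that the previous lemma already does the hard identification.
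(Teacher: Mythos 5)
Your argument is correct, and it runs on exactly the two ingredients the paper uses — transitivity of induction, $\Ind^G_P=\Ind^G_Q\circ\Ind^Q_P$, and the projection formula $\Ind^Q_P(M|_P\otimes V')\cong M\otimes\Ind^Q_P(V')$ — the only difference being that you execute them on the dual side, via the preceding lemma $\cF^G_P(M,V)'=\dot{\bbF}[G,\frg]\otimes_{\dot{\bbF}[P,\frg]}(M\otimes V')$ and the involutivity $(M')'=M$, and then dualize back. The paper applies the same two facts directly to $\cF^G_P(M,V)=\Ind^G_P(M'\otimes V)$ in one line, so the "alternative" you sketch at the end is in fact the paper's proof; your detour buys nothing but is harmless, since both sides do land in the categories where the graded dual is involutive. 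One small imprecision: $V$ is just a finite-dimensional module over the finite group $P$ (no algebraicity is assumed), and in $\cF^G_P(M,V)$ the $\dUg$-action lives entirely on the $M$- (resp.\ $M'$-) factor, twisted by $\Ad$ of the group elements; so your remark about an algebraic $V$ with $\frg$ acting through $\frq$ on $\Ind^Q_P(V)$ is unnecessary, and the $\dUg$-equivariance of the projection-formula map $q\otimes(m\otimes v')\mapsto qm\otimes(q\otimes v')$ is immediate precisely because $\frg$ does not see the $V'$-factor — no divided-power issues arise, as you correctly anticipate.
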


\vskip5pt

\noindent \Pf We have 
\begin{eqnarray*}
 \cF^G_P(M,V) & = &\Ind^G_P(M'  \otimes V)= \Ind^G_Q(\Ind^Q_P(M'  \otimes V)) \\ 
 & = & \Ind^G_Q(M'  \otimes \Ind^Q_P(V))=\cF^G_Q(M,\Ind^{Q}_{P}(V))
\end{eqnarray*}
by the projection formula. Hence we deduce the claim.
\qed

As in \cite{OS} a parabolic Lie algebra  $\frp$ is called {\it maximal} for an object $M\in \dcO^\frp$ if there does not exist a parabolic Lie algebra $\frq \supsetneq \frp$ with  $M\in \dcO^\frq.$

\begin{thm}\label{thm_irred} Let $p>3.$
 Let $M$ be an simple object of $\dcO^\frp_\alg$ such that $\frp$ is maximal   for $M$. Then
 $\cF^G_P(M)$ is  a simple  $\dot{\bbF}[G,\frg]$-module.
\end{thm}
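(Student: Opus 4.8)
The plan is to mimic the proof of the irreducibility criterion in \cite{OS}, transported to the modular/crystalline setting. Since $\cF^G_P(M)$ is simple as a $\dot{\bbF}[G,\frg]$-module if and only if its dual $\cF^G_P(M)' = \Ind^G_P(M) = \dot{\bbF}[G,\frg]\otimes_{\dot{\bbF}[P,\frg]} M$ is simple, I would work with the dual module throughout. So the goal becomes: show that the induced module $\dot{\bbF}[G,\frg]\otimes_{\dot{\bbF}[P,\frg]} M$ has no proper nonzero $\dot{\bbF}[G,\frg]$-submodule.

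First I would set up the Bruhat-type decomposition of $\Ind^G_P(M)=\bigoplus_{g\in G/P}\delta_g\star M$ (as already used in the proof of the lemma on $Mod^{fg,d}_{\dot{\bbF}[G,\frg]}$) and let $N\subset\Ind^G_P(M)$ be a nonzero $\dot{\bbF}[G,\frg]$-submodule. The key mechanism is a "support" argument combined with a geometric/Lie-theoretic lemma: one decomposes $\Ind^G_P(M)$ according to cosets, and shows that if $N$ has nonzero component in the "big cell" $\delta_e\star M$ summand (the identity coset), then, because $\frp$ is maximal for $M$ and $M$ is simple, the $\dUg$-action together with translation by $U^-_P$ forces $N$ to contain all of $\delta_e\star M$; conversely the $G$-action then spreads $N$ across all cosets, giving $N=\Ind^G_P(M)$. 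If $N$ meets some other coset summand first, one translates by a group element to reduce to the big-cell case. The heart of the matter is the step where one uses maximality of $\frp$: given a nonzero submodule of $M$-type data, one produces, via the $\dUg$-action, elements whose weights escape the "$\frp$-admissible" cone, and maximality of $\frp$ (plus simplicity of $M$) is what guarantees this forces everything. Concretely, one wants the analogue of the statement in \cite{OS} that the only $\frg$-stable subspaces of $\Ind^G_P(M)$ compatible with enough of the $G$-action are the "obvious" ones — this is where the parabolic being maximal for $M$ is exactly the obstruction to smaller submodules.

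The hypothesis $p>3$ is presumably needed to invoke the relevant structural results about $\dUg$ and category $\dcO$ from \cite{Hab,An,Fi} — e.g. good behavior of the Verma/simple modules $L(\lambda)$, or a Jantzen-type semisimplicity or the validity of certain $\mathfrak{sl}_2$-computations inside $\dUg$ in small rank. I would cite the precise place where $p>3$ enters (most likely an $\mathfrak{sl}_2$-reduction showing that a weight vector in a simple object can be moved by the crystalline root vectors $x_\alpha^{[n]}, y_\alpha^{[n]}$ to generate a large submodule), and otherwise keep the argument parallel to the complex case: reduce to the big cell, use the PBW decomposition $\dUg=\dUup\otimes_\bbF\dUl\otimes_\bbF\dUump$ and the presentation $\dUump\otimes W\twoheadrightarrow M$, and exploit that $\Ind^G_P$ is exact (Proposition \ref{exact}) and transitive along $P\subset Q\subset G$ (Proposition \ref{PQ}) to handle the interaction between the finite group $G$ and the Lie-algebra action.

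The main obstacle I anticipate is precisely the maximality step: in the $p$-adic/complex setting one has at one's disposal the full machinery of category $\cO$ (e.g. the classification of submodules of parabolic Verma modules, central characters, the action of the center of $U(\frg)$), whereas here $\dUg$ is not Noetherian in the naive sense, the center is more delicate, and local cohomology is not finitely generated over $U(\frg)$ — exactly the pathology the paper is trying to sidestep. So the careful point is to verify that enough of the BGG-category structure survives in $\dcO$ (using \cite{Hab,An,Fi}) to run the argument: specifically, that a simple $M$ with $\frp$ maximal behaves, with respect to being "generated by a lowest-weight-type vector $W$ under $\dUump$", well enough that any $\dot{\bbF}[G,\frg]$-submodule of $\Ind^G_P(M)$ hitting the big cell must contain the generating subspace $W$ and hence all of $\delta_e\star M$. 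I would isolate this as a sublemma, prove it by the $\mathfrak{sl}_2$-type reduction (which is where $p>3$ is consumed), and then the global statement follows formally by the support/Bruhat argument as in \cite{OS}.
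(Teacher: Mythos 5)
Your overall skeleton (pass to the dual $\cF^G_P(M)'=\Ind^G_P(M)$, decompose it via Bruhat as $\bigoplus_{g\in G/P}\delta_g\star M$, and consume the maximality of $\frp$ and the hypothesis $p>3$ in a divided-power/$\mathfrak{sl}_2$-type computation) is the same as the paper's, but the core step of your plan has a genuine gap. You argue that if a nonzero $\dot{\bbF}[G,\frg]$-submodule $N$ has a nonzero component in the big-cell summand $\delta_e\star M$, then the $\dUg$-action "together with translation by $U^-_P$" forces $\delta_e\star M\subset N$, and then $G$ spreads this to all cosets. This implication is false as stated: having nonzero projection onto a simple summand does not give containment of that summand, because of possible diagonal submodules. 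If two summands $\delta_g\star M$ and $\delta_h\star M$ were isomorphic as $\dUg$-modules, the graph of such an isomorphism (suitably matched with the $G$-permutation of cosets) is exactly the kind of proper submodule your argument cannot exclude; and translation by $u\in U^-_P\subset G$ does not help inside the big cell, since it moves $\delta_e\star M$ to a different summand $\delta_u\star M$. The missing idea, which is the actual content of the paper's proof, is that each $\delta_{uw}\star M$ is simple (immediate from simplicity of $M$) and that the summands are \emph{pairwise non-isomorphic} as $\dUg$-modules; only then is every $\dUg$-submodule a subsum of the given summands, and $G$-transitivity on $G/P$ finishes the proof.

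Concretely, the paper proves the non-isomorphism as follows: a putative isomorphism $\delta_{u^{-1}w}\star M\to M$ reduces, via $m\mapsto u^{-1}\cdot m$, to an isomorphism $\phi:\delta_w\star M\to M$. Writing $M=L(\lambda)$ and $P=P_I$ with $I=\{\alpha\in\Delta\mid\langle\lambda,\alpha^\vee\rangle\in\bbZ_{\ge0}\}$ (this identification is precisely where maximality of $\frp$ enters), if $w\notin W_I$ one picks $\beta\in\Phi^+\setminus\Phi^+_I$ with $w^{-1}\beta<0$ and $y\in\frg_{-\beta}\subset\fru^-_P$; then $y^{[n]}\cdot_w v^+=\Ad(w^{-1})(y^{[n]})\cdot v^+=0$ for all $n$, so $y^{[n]}\cdot\phi(v^+)=0$ for all $n$, i.e.\ $y$ acts locally finitely on a nonzero vector, hence on all of $M$ by Lemma \ref{lemma1}(i), contradicting Proposition \ref{notlocnilp}. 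The hypothesis $p>3$ is consumed exactly in Lemma \ref{lemma1} and Proposition \ref{notlocnilp} (one must divide by $2!$ and $3!$ and use that $\ad(x)^k$ vanishes on root vectors for $k\ge4$), not in general structural results on $\dcO$. Your sublemma ("any submodule hitting the big cell contains $W$, hence $\delta_e\star M$") is not what the $\mathfrak{sl}_2$-reduction proves, and without the pairwise non-isomorphism statement your support/Bruhat argument does not close.
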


\begin{proof}
  The proof follows the idea of loc.cit. and is even simpler.
  We start with the observation that by duality  $\cF^G_P(M,V)$  is simple as  $\dot{\bbF}[G,\frg]$-module iff $\cF^G_P(M,V)'$ is simple as  $\dot{\bbF}[G,\frg]$-module.
    We consider again the Bruhat decomposition $G/P= \bigcup_{w \in W_P} U_{B,w}^-wB/B$ and the induced decomposition
$$\cF^G_P(M)'=\bigoplus_{w \in W_P} \bigoplus_{u \in U_{B,w}^-}\delta_{uw} \star M.$$ We denote (with respect to $\delta_{uw} \star M$) for elements $\frz \in \dUg$ and $m \in M$ the action of $\frz$ on $m$ by $\frz\cdot_{uw} m.$
Now each summand $\delta_{uw} \star M$ is simple since $M$ is simple.
Thus it suffices to show that the summands are pairwise non isomorphic as $\dUg$-modules. Suppose that there is an isomorphism
$\phi: \delta_g \star M \to \delta_h \star M$ for some elements $g,h$ as above. We may suppose that $h=e.$ Write $g=u^{-1}w.$ Then such an isomorphism is equivalent to an isomorphism $\phi: \delta_w \star M \to \delta_u \star M \cong M$. The latter isomorphism is given by the mapping $m \mapsto u^{-1}\cdot m.$

 We show that this can only happen if $w \in W_P$. Let $\lambda \in X({\bf T})^\ast$ be the highest weight of $M$, i.e. $M = L(\lambda)$, and $P=P_I$ is the standard parabolic subgroup induced by  $I = \{\alpha \in \Delta \midc \langle \lambda, \alpha^\vee \rangle \in \bbZ_{\ge 0} \}$, cf. \cite{Hu}. Suppose $w$ is not contained in $W_I = W_P$. Then there is a positive root  $\beta \in \Phi^+\setminus\Phi^+_I$ such that $w^{-1}\beta < 0$, hence $w^{-1}(-\beta) > 0$. Consider a non-zero element element $y \in \frg_{-\beta}$, and let $v^+ \in M$ be a weight vector of weight $\lambda$. Then we have for $n \in \bbN,$ the following identity 
$$y^{[n]} \cdot_w v^+ = \Ad(w^{-1})(y^{[n]}) \cdot v^+ = 0$$
\noindent as ${\rm Ad}(w^{-1})(y^{[n]}) \in \frg_{-w^{-1}\beta}$ annihilates $v^+$. We have $\phi(v^+) = v$ for some nonzero $v \in M$. And therefore
$$0 = \phi(y^{[n]} \cdot_w v^+) = y^{[n]} \cdot \phi(v^+) = y^{[n]} \cdot v \;.$$
But $y$ is an element of $\fru^-_P$, hence we get a contradiction by Proposition \ref{notlocnilp} since $n$ was arbitrary.
\end{proof}

\begin{thm}\label{thm_irred_V} Let $p >3.$
 Let $M$ be an simple object of $\dcO^\frp_\alg$ such that $\frp$ is maximal for $M$ and let $V$ be an irreducible $P$-representation. Then
 $\cF^G_P(M,V)$ and its dual $\cF^G_P(M,V)'$ are simple as  $\dot{\bbF}[G,\frg]$-module.
\end{thm}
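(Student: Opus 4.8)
The plan is to run the argument of Theorem \ref{thm_irred}, carrying the coefficient representation $V$ through it. First I would reduce everything to one statement: since $\cM \mapsto \cM'$ is an involutive anti-equivalence on $Mod^d_{\dot{\bbF}[G,\frg]}$ interchanging sub- and quotient modules, $\cF^G_P(M,V)$ is simple if and only if $\cF^G_P(M,V)'$ is, so it is enough to prove the latter. By the formula $\cF^G_P(M,V)'=\Ind^G_P(M\otimes V')$ established above, we are reduced to showing that $\Ind^G_P(M\otimes V')$ is simple as a $\dot{\bbF}[G,\frg]$-module. Here $V'$ is again an irreducible $P$-representation ($P$ being a finite group, the dual of a simple module is simple), the Lie algebra $\frg$ acts on $M\otimes V'$ through the $M$-factor only, and $M\cong L(\lambda)$ for an algebraic character $\lambda$; in particular $M$ is simple as a $\dUg$-module and, having one-dimensional highest weight space, satisfies $\End_{\dUg}(M)=\bbF$. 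Put $N_0:=M\otimes V'$, a $\dot{\bbF}[P,\frg]$-module.

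Next I would study $\Ind^G_P N_0$ as a $\dUg$-module. As an $\bbF[G]$-module it is $\bigoplus_{g\in G/P} g\otimes N_0$, and from $(1\#z)(g\#1)=g\#\Ad(g)(z)$ one sees that each summand is a $\dUg$-submodule, isomorphic as such to $(\delta_g\star M)\otimes V'\cong(\delta_g\star M)^{\oplus\dim V}$. Choosing the Bruhat representatives $g=uw$ with $w\in W_P$, $u\in U_{B,w}^-$, the computation in the proof of Theorem \ref{thm_irred} --- this is where $p>3$, the maximality of $\frp$ for $M$, and the non-local-nilpotence of elements of $\fru^-_P$ (Proposition \ref{notlocnilp}) are used --- shows that the modules $\delta_g\star M$ are simple and pairwise non-isomorphic for distinct cosets $g$. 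Consequently $\Ind^G_P N_0$ is a semisimple $\dUg$-module whose isotypic components are precisely the summands $g\otimes N_0$.

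Now I would take a nonzero $\dot{\bbF}[G,\frg]$-submodule $\cN\subseteq\Ind^G_P N_0$ and show it is everything. As a $\dUg$-submodule of a semisimple module, $\cN$ is semisimple and equals the direct sum of its isotypic components, which --- since the $\delta_g\star M$ are pairwise non-isomorphic --- are $\cN_g:=\cN\cap(g\otimes N_0)$; and since $\End_{\dUg}(M)=\bbF$, each $\cN_g$ corresponds under $g\otimes N_0\cong N_0$ to $M\otimes U_g$ for some subspace $U_g\subseteq V'$. Because $G$ acts transitively on $G/P$ and $\cN$ is $G$-stable and nonzero, some $\cN_g$ is nonzero, and translating by a suitable element of $G$ shows $\cN_e=\cN\cap N_0$ is nonzero. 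Since $P$ stabilises the component $e\otimes N_0$, the subspace $\cN_e=M\otimes U_e$ is a $\dot{\bbF}[P,\frg]$-submodule of $N_0=M\otimes V'$; as $P$ acts diagonally and $M\neq 0$, $U_e$ is forced to be a $P$-subrepresentation of $V'$, and it is nonzero, hence $U_e=V'$ by irreducibility and $\cN_e=N_0$. Finally $\cN\supseteq N_0$ is $G$-stable, so $\cN\supseteq G\cdot N_0=\Ind^G_P N_0$, whence equality. Thus $\cF^G_P(M,V)'$ is simple, and by the first reduction so is $\cF^G_P(M,V)$.

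The one genuinely delicate ingredient is imported wholesale from Theorem \ref{thm_irred}: the pairwise non-isomorphism, as $\dUg$-modules, of the twisted modules $\delta_g\star M$ over distinct Bruhat cells. It is this that forces the isotypic decomposition of $\Ind^G_P N_0$ to coincide with the decomposition along $G/P$ and hence prevents a submodule from lying diagonally across several cells. Granting it, the passage from $\cF^G_P(M)$ to $\cF^G_P(M,V)$ is formal, the only new point being the elementary fact that a $\dot{\bbF}[P,\frg]$-submodule of $M\otimes V'$ --- with $M$ a simple $\dUg$-module, $\frg$ acting through $M$ and $P$ acting diagonally --- is of the form $M\otimes U$ for a $P$-subrepresentation $U\subseteq V'$.
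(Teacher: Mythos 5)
Your proof is correct and follows essentially the same route as the paper: reduce by duality to $\cF^G_P(M,V)'=\Ind^G_P(M\otimes V')$, use the pairwise non-isomorphism of the twists $\delta_g\star M$ (imported from Theorem \ref{thm_irred}) to force any nonzero submodule to decompose along $G/P$ as $\bigoplus_g \delta_g\star M\otimes U_g$, then use irreducibility of $V$ at the identity component and $G$-transitivity to conclude. If anything, you spell out more carefully (via $\End_{\dUg}(M)=\bbF$) why the identity component has the form $M\otimes U$ with $U$ a $P$-subrepresentation, a point the paper leaves implicit.
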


\begin{proof}

Again by duality it is enough to check the assertion for $\cF^G_P(M,V)'.$
So let $U \subset \cF^G_P(M,V)'$ be a non-zero  $G$-invariant subspace. Recall that 
$\cF^G_P(M)' =  \bigoplus_{\gamma \in G/P} \delta_\gamma \star L(\lambda)$ so that 
$$\cF^G_P(M,V) =  \bigoplus_{\gamma \in G/P} \delta_\gamma \star L(\lambda)' \otimes V^\gamma.$$
Considered as $\dUg$-module $\cF^G_P(M,V)$ is isomorphic to $(\bigoplus_{\gamma\in G/P} \delta_\gamma \star L(\lambda)') \otimes V.$ Hence by the simplicity of $M$ and since the summands $\delta_\gamma \star L(\lambda)'$ are pairwise not isomorphic 
the $\dUg$-module $U$ is equal to
$$\bigoplus_{\gamma \in G/P}
\delta_\gamma \star L(\lambda)' \otimes_\bbF V_\gamma \,,$$
with subspaces, $V_\gamma,\gamma,$ of $V$. Here $\delta_1 \star L(\lambda)' \otimes V_1=L(\lambda)' \otimes V_1$ is a $\dot{\bbF}[P,\frg]$-submodule of
$L(\lambda)' \otimes V$. Since $V$ ist irreducible the latter object is irreducible, as well. Hence $V_1=V.$ But since $G$ permutes the summands of $U$ we see that $U=\cF^G_P(M,V)'.$ 
\end{proof}

In the following statement we merely consider elements in a root space by the very definition of $\dUg.$ 
\begin{lemma}\label{lemma1} Let $p >3.$ Let $x \in \frg_\gamma$ some element for $\gamma \in \Phi$. Let $M$ be a
$\dUg$-module and $v \in M$.

\noindent (i) If $x$ acts locally finitely\footnote{Note that this definition is stronger than the one in characteristic $0$.} on $v$ (i.e., the $K$-vector space generated by $(x^{[i]}.v)_{i \ge 0}$ is finite-dimensional), then $x$ acts locally finitely on $\dUg.v$.

\noindent (ii) If $x.v = 0$ and $[x,[x,y]] = 0$ for some $y \in \frg_\beta$, where $\beta \in \Phi$ then 
$$x^{[n]} y^{[n]}.v = [x,y]^{[n]}.v \;.$$
\end{lemma}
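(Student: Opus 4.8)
The statement splits into two nearly independent assertions about the divided-power action of a root vector, so I would treat (i) and (ii) in turn.

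For (i): the point is that if $x$ generates a finite-dimensional space on $v$, the same holds on $\dUg\cdot v$. I would use the PBW-decomposition $\dUg = \dUu \otimes_\bbF \dUt \otimes_\bbF \dUum$, together with the fact that for $x \in \frg_\gamma$ the operators $\ad(x)^{[i]}$ on $\dUg$ are themselves locally finite on each PBW monomial (this is where one really needs $p$ large, e.g. $p>3$, so that the divided powers of $\ad(x)$ applied to the standard generators $x_\alpha^{[m]}, y_\alpha^{[m]}, \binom{h_\alpha}{m}$ again lie in a finite-dimensional subspace — the commutator formulas in the Kostant $\Z$-form stay integral). Concretely: pick a monomial basis $z_1,\dots,z_N$ spanning the (finite-dimensional) span of the $x^{[i]}\cdot v$, and write $x^{[n]}\cdot(u\cdot v)$ for $u \in \dUg$ using the "Leibniz rule" $x^{[n]}(uv')=\sum_{a+b=n} (\ad(x)^{[a]}u)\, x^{[b]}v'$ valid for divided powers (the crystalline analogue of $x(uv')=(\ad x\,u)v'+u\,xv'$). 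Since $\ad(x)$ is locally finite on $\dUg$ and $x$ is locally finite on $v$, each such expression stays in a fixed finite-dimensional space; running over a generating set of $\dUg\cdot v$ gives the claim. The main obstacle here is bookkeeping with the divided-power Leibniz rule and checking that $\ad(x)$ really is locally finite on $\dUg$ in positive characteristic — one should cite the explicit commutator relations in the Kostant form (e.g. from \cite{Ja}) rather than rederive them.

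For (ii): this is a closed identity in the divided-power enveloping algebra under the hypotheses $x\cdot v=0$ and $[x,[x,y]]=0$. I would first prove the operator identity in $\dUg$ (or rather, modulo the left ideal annihilating $v$): set $h:=[x,y]$, so that $[x,y]=h$, $[x,h]=0$. In the ordinary enveloping algebra over $\Z$ one has the identity $x^n y^n = \sum_{\text{terms}}$, but under $[x,h]=0$ it collapses dramatically; the cleanest route is to show by induction on $n$ that $x\cdot(x^{[n-1]}y^{[n]}\cdot v) = x^{[n]}y^{[n-1]}h\cdot v$-type relations, or better, to prove $x^{[n]}y^{[n]} \equiv h^{[n]} \pmod{\dUg\cdot x + \dUg\cdot[x,[x,y]]}$ directly. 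A slick way: conjugate. Since $x\cdot v = 0$ and $x$ commutes with $h$, consider the "exponential" $\exp$-type automorphism — but in positive characteristic one cannot exponentiate freely, so instead argue combinatorially: expand $x^{[n]}y^{[n]} = \frac{1}{(n!)^2}x^n y^n$ formally in the $\Q$-form, use the classical identity $x^n y^n = \sum_{k=0}^{n}\binom{n}{k}^2 k!\, (\text{products of }h,\;\text{lower }x,y)$ which under $[x,h]=0$ and killing all terms that still carry an $x$ on the right reduces to $\binom{n}{n}^2 n!\, h^{(n)}\cdot(\text{stuff})$; dividing by $(n!)^2$ leaves exactly $h^{[n]}$. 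The genuine subtlety — and the step I expect to be the main obstacle — is that these manipulations are a priori only valid over $\Q$, so I must check that after dividing by $n!$ twice the resulting coefficients are still $p$-integral and that the identity descends to $\dUg_\bbZ \otimes \bbF$; this is exactly the kind of integrality statement that the Kostant form is designed to make work, and the hypothesis $p>3$ should ensure no small-prime denominators survive. Once the operator identity $x^{[n]}y^{[n]} - h^{[n]} \in \dUg\cdot x + \dUg\cdot[x,[x,y]]$ holds in $\dUg$, applying it to $v$ and using $x\cdot v=0$, $[x,[x,y]]\cdot v = 0$ (which follows from $[x,[x,y]]=0$ as an element of $\frg$) yields $x^{[n]}y^{[n]}\cdot v = h^{[n]}\cdot v = [x,y]^{[n]}\cdot v$, as required.

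In summary: (i) is a Leibniz-rule-plus-local-finiteness-of-$\ad$ argument, routine modulo citing the integral commutator relations; (ii) is a closed combinatorial identity in the divided-power algebra whose only real content is $p$-integrality of the coefficients, for which I would lean on the standard properties of the Kostant $\Z$-form and the hypothesis $p>3$.
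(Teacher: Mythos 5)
Part (i) of your proposal is essentially the paper's own argument: the paper takes the commutation formula of \cite[Lemma 8.1]{OS}, rewrites it with divided powers, and uses that $[x^{(k)},z]=0$ for $k\ge 4$ when $z$ lies in a root space (so only the denominators $2!$ and $3!$ occur, whence $p>3$), together with a bound on the order of the resulting differential operators to land in a fixed finite-dimensional space. Your divided-power Leibniz rule $x^{[n]}u=\sum_{a+b=n}(\ad(x)^{[a]}u)\,x^{[b]}$ is the same computation in compact form, and your plan to quote the integrality of the Kostant form for the local nilpotence of the operators $\ad(x)^{[a]}$ on $\dUg$ is fine.

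For (ii) there is a genuine gap in the intermediate statement you propose to prove. The membership $x^{[n]}y^{[n]}-[x,y]^{[n]}\in \dUg\cdot x+\dUg\cdot[x,[x,y]]$ is false in the Kostant form once $n\ge p$, and the obstruction is not $p$-integrality of coefficients (no denominators survive at all); it is which left ideal the error term lies in. Concretely, for a Heisenberg triple $x\in\frg_\gamma$, $y\in\frg_\beta$, $h=[x,y]$ with $[x,h]=[y,h]=0$ (e.g.\ $x=E_{12}$, $y=E_{23}$ in $\mathfrak{sl}_3$, which satisfies your hypotheses) one has the exact integral identity $x^{[n]}y^{[n]}=\sum_{k=0}^{n}y^{[n-k]}h^{[k]}x^{[n-k]}$ in $\dUg$, so the error term $\sum_{k=0}^{n-1}y^{[n-k]}h^{[k]}x^{[n-k]}$ lies only in the left ideal generated by all divided powers $x^{[m]}$, $m\ge 1$, and not in $\dUg\cdot x$ (for instance $x^{[p]}\notin\dUg\cdot x$). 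Consequently the hypothesis $x.v=0$ alone does not kill these terms when $n\ge p$: one needs $x^{[m]}.v=0$ for all $m\ge 1$. This stronger hypothesis is exactly what is available in the only place the lemma is used (Proposition \ref{notlocnilp}, where $v^+$ is a maximal vector annihilated by all divided powers of positive root vectors), so the fix is to prove membership in the left ideal $\sum_{m\ge 1}\dUg\,x^{[m]}$ and to use that hypothesis; note that the paper's own one-line proof (``divide the characteristic-zero identity of \cite[Lemma 8.2 ii)]{OS} twice by $n!$'') silently elides the same point, and that $p>3$ plays no role in this part.
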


\begin{proof} (i)
The idea is to apply Lemma 8.1 of loc.cit. which gives in characteristic 0 the formula
$$x^k \cdot z_1z_2 \ldots z_n = \sum_{
i_1 + \ldots + i_{n+1} = k} \frac{k !}{i_1 !  \ldots  i_{n+1}!} [x^{(i_1)},z_1]\cdot \ldots \cdot [x^{(i_n)},z_n]x^{i_{n+1}}.$$
Here the expression $[x^{(i)},z]$ means $ad(x)^i(z).$
We may rewrite this as
$$x^{[k]} \cdot z_1z_2 \ldots z_n = \sum_{
i_1 + \ldots + i_{n+1} = k} \frac{1}{i_1 !  \ldots  i_{n}!} [x^{(i_1)},z_1]\cdot \ldots \cdot [x^{(i_n)},z_n]x^{[i_{n+1}]}.$$
Indeed  we consider the PBW-decomposition $\dUg = \dUu \otimes \dUt \otimes \dUu$ and assume that the elements $z_i$ lie without loss of generality in one of these factors. For any element $z$ in some root space it follows from  \cite[0.2]{Hu} that $[x^{(k)},z]=0$ for all $k \geq 4.$ Since we avoid the situation $p=2,3$ we my divide my the denominators $2!$ and $3!$.

Now in contrast to loc.cit. we have again to consider $z_i$ as elements of $\dUg$ instead of elements in $\frg.$ Let $d_i$ be the order of the differential  $z_i$. Then $[x^{(i_1)},z_1]\cdots [x^{(i_n)}, z_n]$ is an differential  of order less than $4(d_1+\ldots + d_n).$ In particular we can conclude as in loc.cit. that the term lies in a finite dimensional vector space which gives now easily the claim. 

ii) In characteristic 0 we have the formula $x^{n} y^{n}.v = n! \cdot [x,y]^{n}v,$  cf. \cite[Lemma 8.2 ii)]{OS}. We only have to divide  two times by $n!.$
\end{proof}

\begin{prop}\label{notlocnilp} Let $p >3.$ Let $\frp = \frp_I$ for some $I \sub \Delta$. Suppose $M \in \dcO^\frp$ is a highest weight module with highest weight $\lambda$ and
$$I = \{\alpha \in \Delta \midc \langle \lambda, \alpha^\vee \rangle \in \bbZ_{\ge 0} \} \;.$$
\noindent Then no non-zero element of a root space of  $\fru_\frp^-$ acts locally finitely on $M$.
\end{prop}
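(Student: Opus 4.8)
The plan is to assume that some nonzero $y\in\frg_{-\beta}$ with $\beta\in\Phi^+\setminus\Phi_I^+$ acts locally finitely on $M$ and to derive a contradiction in two stages: first I would reduce to the case that $\beta$ is a simple root not lying in $I$, and then I would rule that case out by an $\mathfrak{sl}_2$-computation. As a preliminary, fix a maximal vector $v^+\in M$ of weight $\lambda$, so that $M=\dUg\cdot v^+$. Since $x\cdot v^+=0$ for every $x\in\frg_\gamma$ with $\gamma\in\Phi^+$, Lemma~\ref{lemma1}(i) shows that every such $x$ acts locally finitely on all of $M$; by the same lemma, a root vector acts locally finitely on $M$ if and only if it does so on $v^+$. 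Hence it is enough to work with the single vector $v^+$.

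For the reduction I would prove, by induction on $\mathrm{ht}(\beta)$, the following: if $0\neq y_\beta\in\frg_{-\beta}$ acts locally finitely on $v^+$ for some $\beta\in\Phi^+\setminus\Phi_I^+$, then some nonzero $y_{\alpha_0}\in\frg_{-\alpha_0}$ with $\alpha_0\in\Delta\setminus I$ does as well. The case $\mathrm{ht}(\beta)=1$ is immediate. If $\beta$ is not simple, choose $\alpha\in\Delta$ with $\gamma:=\beta-\alpha\in\Phi^+$, so that $[x_\alpha,y_\beta]=c\,y_\gamma$ with $0\neq y_\gamma\in\frg_{-\gamma}$ and $c\in\bbF^\times$ (the relevant structure constant has absolute value $\leq 3$, hence is invertible since $p>3$). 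The key point is that $\Ad(x_\alpha)^2 y_\beta=0$, i.e. $2\alpha-\beta\notin\Phi\cup\{0\}$: this is automatic in the simply laced case, in particular for $\GL_{d+1}$, and in general one either chooses the chain so that it holds at each step or invokes the analogous higher commutator identity (available for $p>3$, since $\Ad(x_\alpha)^4 y_\beta=0$ and the denominators involved are divisible only by $2$ and $3$). Granting this, Lemma~\ref{lemma1}(ii) applied to $x=x_\alpha$, $y=y_\beta$, $v=v^+$ gives
$$x_\alpha^{[n]}\,y_\beta^{[n]}\cdot v^+=[x_\alpha,y_\beta]^{[n]}\cdot v^+=c^{n}\,y_\gamma^{[n]}\cdot v^+\qquad(n\in\bbN).$$
As $y_\beta$ and $x_\alpha$ act locally finitely on $v^+$, hence on $M$, the left-hand side lies in a fixed finite-dimensional subspace for all $n$, so $y_\gamma$ acts locally finitely on $v^+$. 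If $\gamma\notin\Phi_I^+$, the induction hypothesis finishes the argument; if $\gamma\in\Phi_I^+$, then $\alpha\in\Delta\setminus I$ (otherwise $\beta=\alpha+\gamma\in\Phi\cap\bbZ I=\Phi_I$), and running the same commutator argument with $x_\gamma$ in place of $x_\alpha$ shows that $y_\alpha$ acts locally finitely on $v^+$, so one takes $\alpha_0=\alpha$.

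It then remains to rule out the case $\beta=\alpha_0\in\Delta\setminus I$ simple. Here $\mu:=\langle\lambda,\alpha_0^\vee\rangle$ is a negative integer, and $v^+$ is a highest weight vector of weight $\mu$ for the $\mathfrak{sl}_2$-triple $(x_{\alpha_0},h_{\alpha_0},y_{\alpha_0})$; the submodule it generates over the crystalline enveloping algebra of this $\mathfrak{sl}_2$ is a quotient of the corresponding Verma module $M(\mu)$, in which one computes $x_{\alpha_0}^{[n]}\,y_{\alpha_0}^{[n]}\cdot v^+=\binom{\mu}{n}\,v^+$ for every $n$. Taking $n=p^N$ with $p^N>|\mu|$ and applying Lucas' theorem, $\binom{\mu}{p^N}=(-1)^{p^N}\binom{p^N+|\mu|-1}{|\mu|-1}\not\equiv 0\pmod p$, so $y_{\alpha_0}^{[p^N]}\cdot v^+\neq 0$ in $M$ for all large $N$; since these vectors have pairwise distinct weights $\lambda-p^N\alpha_0$, the span of $\{y_{\alpha_0}^{[n]}\cdot v^+\}_{n\geq 0}$ is infinite-dimensional, which contradicts the reduction. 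I expect the reduction to simple roots to be the main obstacle — concretely, arranging the hypothesis $\Ad(x_\alpha)^2 y_\beta=0$ of Lemma~\ref{lemma1}(ii) uniformly over all split reductive ${\bf G}$ (it is free in the simply laced case relevant to the paper); the $\mathfrak{sl}_2$-identity and the binomial-coefficient arithmetic are routine.
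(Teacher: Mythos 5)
Your proposal follows essentially the same route as the paper's proof, organized contrapositively: the paper argues directly, by induction on the height of the root, that $y_\gamma^{[n]}\cdot v^+\neq 0$ for infinitely many $n$, using Lemma~\ref{lemma1}(ii) to pass to a root of smaller height and Haboush's identity $x_\gamma^{[n]}y_\gamma^{[n]}\cdot v^+=\binom{\lambda(h_\gamma)}{n}v^+$ at height one; you instead propagate an assumed locally finite action down to a simple root in $\Delta\setminus I$ (using Lemma~\ref{lemma1}(i) to move between $v^+$ and $M$) and then contradict it by the same height-one binomial computation. Your Lucas-theorem verification that $\binom{\mu}{p^N}\not\equiv 0\pmod p$ for $\mu$ a negative integer is a correct and welcome explicit form of the nonvanishing the paper asserts, and in the simply laced case (in particular for $\GL_{d+1}$) your argument is complete.

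There is, however, a genuine gap exactly where you flag one, and it matters because the proposition lives in the general split reductive setting of Section 2 (this is why $p>3$ and the bound $k_0\le 3$ from \cite[0.2]{Hu} appear). Your reduction step needs $\Ad(x_\alpha)^2y_\beta=0$, i.e.\ that the relevant root string has length at most two, and neither of your proposed remedies is carried out: in particular one cannot always ``choose the chain so that it holds at each step''. In $G_2$, take $\beta=2\alpha_1+\alpha_2$: the only positive roots $\delta$ with $\beta-\delta\in\Phi^+$ are $\alpha_1$ and $\alpha_1+\alpha_2$, and in both cases $\beta-2\delta\in\Phi\cup\{0\}$, so the hypothesis of Lemma~\ref{lemma1}(ii) fails for every admissible choice. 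One therefore genuinely needs the higher-order identity, which is the paper's case (b): from the expansion underlying Lemma~\ref{lemma1}(i) one gets $x_\alpha^{[nk_0]}y_\gamma^{[n]}\cdot v^+=\frac{1}{(k_0!)^n}\,[x_\alpha^{(k_0)},y_\gamma]^{[n]}\cdot v^+$ with $k_0\le 3$ (invertible factorials since $p>3$), together with a further sub-case analysis when $\gamma-k_0\alpha\in\Phi_I$ (forcing $\alpha\notin I$ and a second reduction via $x_\beta$). To cover the statement as written you would have to supply this step; as it stands your proof establishes the proposition only for simply laced root systems.
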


\begin{proof}
The proof is in principal the same as  in  the case of characteristic 0 \cite[Cor. 8.2]{OS}. However we have to modify some technical ingredients of the necessary lemmas  due the different characteristic.

let $y \in (\fru_\frp^-)_\gamma$ for some root $\gamma.$
 Let $v^+$ be a weight vector with weight $\lambda$.  
 Write $\gamma = \sum_{\alpha \in \Delta} c_\alpha \alpha$ (with non-negative integers $c_\alpha$). We show by induction on the height $ht(\gamma)$ of $\gamma$ (Recall that $ht(\gamma) = \sum_{\alpha \in \Delta} c_\alpha$) that $y_\gamma$ can not act locally finite. For this it suffices by weight reasons to show that $y_\gamma^{[n]}.v^+ \neq 0$ for infinitely many  positive integers $n$.

If $ht(\gamma) = 1$, then $\gamma$ is an element of $\Delta \setminus I$. Rescaling $y_\gamma$ we can choose $x_\gamma \in \frg_\gamma$ such that $[x_\gamma,y_\gamma] = h_\gamma$ and $[h_\gamma,x_\gamma] = 2x_\gamma$ and $[h_\gamma,y_\gamma] = -2y_\gamma$. 
Then by  \cite[5.2]{Hab} we get
\begin{numequation}\label{heightone}
x_\gamma^{[n]}y_\gamma^{[n]}.v^+ = \left(\lambda(h_\gamma)  \atop n \right).v^+ = \frac{1}{n!}\prod_{i=0}^{n-1}(\langle \lambda, \gamma^\vee \rangle - i).v^+ \,.
\end{numequation}
\noindent As $I = \{\alpha \in \Delta \midc \langle \lambda, \alpha^\vee \rangle \in \bbZ_{\ge 0} \}$, it follows that $\langle \lambda, \gamma^\vee \rangle \notin \bbZ_{\ge 0}$ and the term on the right of \ref{heightone} does not vanish for infinitely many  $n.$ 
In particular, $y_\gamma^n.v^+ \neq 0$ for infinitely many $n \ge 0$.


Now suppose $ht(\gamma)>1$. Then we can write $\gamma = \alpha + \beta$ with $\alpha \in \Delta$ and $\beta \in \Phi^+$. Clearly, not both $\alpha$ and $\beta$ can be contained in $\Phi_I$. We distinguish two cases.


(a) Let  $\beta - \alpha \notin \Phi$. Then we get
for  $\alpha \notin I$ by Lemma \ref{lemma1}:
$$x_\beta^{[n]}y_\gamma^{[n]}.v^+ = [x_\beta,y_\gamma]^{[n]}.v^+ $$
where $x_\beta$ is a non-zero element of $\frg_\beta$.
We conclude by induction that $[x_\beta,y_\gamma]^{[n]}.v^+ \neq 0$ for infinitely many $n \ge 0$. 

For $\alpha \in I$  we have by Lemma \ref{lemma1}:
$$x_\alpha^{[n]}y_\gamma^{[n]}.v^+ = [x_\alpha,y_\gamma]^{[n]}.v^+ \,.$$ 
where $x_\alpha$ be a non-zero element of $\frg_\alpha$.
Again we conclude by induction the claim. And thus $y_\gamma^{[n]}.v^+ \neq 0$ for infinitely many $n \ge 0$.


(b) Let $\beta - \alpha$ is in $\Phi$. Then we have $\gamma - k\alpha \in \Phi^+$ for $0 \le k \le k_0$ (with $k_0 \le 3$, cf. \cite[0.2]{Hu}), and $\gamma - k\alpha \notin \Phi \cup \{0\}$ for $k > k_0$. This implies $[x_\alpha^{(i)},y_\gamma] = 0$ for $i>k_0$. By Lemma \ref{lemma1} we conclude as in loc.cit.
$$x_\alpha^{[nk_0]}y_\gamma^{n}.v^+ = 
\sum_{i_1 + \ldots + i_n = nk_0} \frac{1}{i_1 ! \ldots  i_n!} [x_\alpha^{(i_1)},y_\gamma]\cdot \ldots \cdot [x_\alpha^{(i_n)},y_\gamma].v^+
$$
which can be rewritten as (the corresponding term vanishes if there is one $i_j>k_0$) 
$$\frac{1}{(k_0!)^n} \; [x_\alpha^{(k_0)},y_\gamma]^{n}.v^+.$$ 
Thus we get 
$$x_\alpha^{[nk_0]}y_\gamma^{[n]}.v^+=\frac{1}{(k_0!)^n} \; [x_\alpha^{(k_0)},y_\gamma]^{[n]}.v^+.$$
If $\gamma - k_0\alpha$ is not in $\Phi_I$ we are done by induction. Otherwise we necessarily have $\alpha \notin I$. In this case, if we choose some $x_\beta \in \frg_\beta \setminus \{0\}$ and deduce as in loc.cit that 
$$x_\beta^{[n]}y_\gamma^{[n]}.v^+ = [x_\beta,y_\gamma]^{[n]}.v^+ \,,$$
As we are now in the case of height one, we can thus conclude again.
\end{proof}

\medskip

\begin{rmk}
 Unfortunately  objects in the category $\dcO$ do not have finite length in general. This holds in particular  for the local cohomology modules $H^{d-i}_{\bbP^i}(\bbP^d,\cO)$ as discussed in \cite{Ku}. However in loc.cit. it was pointed out that one can consider composition series of countable length in the sense of Birkhoff \cite{Bi}. In this way one can use similar to the $p$-adic case \cite{OS} the functors $\cF^G_P$ for a description of the composition factors of the terms  $\Ind^G_{P_{(j+1,d-j)}}(\tilde{H}^{d-j}_{\bbP^j}(\bbP^n,\cE)\otimes St_{d+1-j})$ appearing in the Theorem of Kuschkowitz.
\end{rmk}

\section{Second approach}

This section is inspired by the theory of $\cD$-modules.
Here we carry out the theory presented in the previous section 
for the rings of differential operators on the flag variety $X:={\bf B}_\bbF \backslash {\bf G}_\bbF.$ 

Let $D_{\bbP^d_\bbF}(\bbP^d_\bbF)$ be the space of global sections of the $\cD$-module sheaf $D_{\bbP^d_\bbF}$  on the projective variety $\bbP^d_\bbF$.  For a homogeneous vector bundle  
$\cE$ on $\bbP^d_\bbF$, set $$D^{\cE}_{\bbP^d_\bbF}=\cE(\bbP^d_{\bbF}) \otimes D_{\bbP^d_\bbF}(\bbP^d_{\bbF}) \otimes \cE^\ast(\bbP^d_{\bbF}).$$   Then  
$D^\cE_{\bbP^d_\bbF}$ acts naturally on $\cE(\cX)$  and  the filtration appearing in Kuschkowitz's theorem. Instead we consider (which become clear later) the space of global sections $D=D_{X}(X)$ of the differential operators on $X$ and 
$$D^\cE=\cE(X) \otimes D \otimes \cE(X)$$ for any homogeneous vector bundle $\cE$ on $B\backslash G.$
There is an action of $D^\cE$ on all the above objects as well. 
We consider further the Beilinson-Bernstein homomorphism 
$$\pi^\cE:\dUg \rightarrow D^\cE$$
which is not surjective (for $\cE=\cO_X$) in positive characteristic as shown by Smith in \cite{Sm}.

Consider the covering
$X=\bigcup_{w\in W} B\backslash B U^-w$ by translates of the big open cell $B\backslash B U^-$. Let $D^1=D(B\backslash B U^-).$ Thus $D^1$ is the crystalline Weyl algebra
$$D^1=\bbF[T_\alpha \mid \alpha \in \Phi^- ]\langle y^{[n]}_\alpha \mid \alpha \in \Phi^- , n\in \bbN \rangle.$$
By the sheaf property we see that  
$D$ coincides with the set
\begin{equation}\label{sheaf_projective_covering}
 \{\Theta \in D^1 \mid \Theta(\cO(B\backslash B U^-w)) \subset \cO(B\backslash B U^-w) \;\forall w  \}.
\end{equation}

For any prime power $q=p^n$ we let $D^1_q$ be the differential operators which are $\bbF[T_\alpha^q \mid \alpha \in \Phi^-]$-linear. Then we have $D=\bigcup_n D_{p^n}.$
The next statement is a generalization of \cite[lemma 3.1]{Sm}.
We set for $\alpha >0$, $T_\alpha:=T_{-\alpha}^{-1}.$ 

\begin{lemma}\label{lemma_diff}
 Let $\Theta \in D^1_q$. Then $\Theta \in D$ iff
 
 i) $\Theta(1) \in \bbF$
 
 and 
 
 ii) $\Theta(\prod_{\alpha\in \Phi^-} T_\alpha^{i_\alpha})\in V:=\bigoplus_{0\leq j_\alpha \leq q} \prod_{\alpha\in \Phi^-} T_\alpha^{j_\alpha}$ for all tuples 
 $(i_\alpha)_\alpha$ with $0 \leq i_\alpha \leq q-1.$ 
\end{lemma}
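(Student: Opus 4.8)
The plan is to characterize membership of $\Theta\in D^1_q$ in $D$ via the description \eqref{sheaf_projective_covering}, i.e.\ the condition that $\Theta$ preserves $\cO(B\backslash BU^-w)$ for every $w\in W$. First I would reduce, as in the structure preceding the lemma, to checking stability only on the big cells $B\backslash BU^-w$; since $\Theta$ is a differential operator with coefficients in $\cO(B\backslash BU^-)$, the content of $\Theta\in D$ is a set of integrality conditions on how $\Theta$ acts on the coordinate ring $\bbF[T_\alpha\mid\alpha\in\Phi^-]$ relative to the other charts. The key reduction is that, because $\Theta\in D^1_q$ is $\bbF[T_\alpha^q]$-linear, it is determined by its effect on the finite set of "low-degree" monomials $\prod_{\alpha\in\Phi^-}T_\alpha^{i_\alpha}$ with $0\le i_\alpha\le q-1$: any element of $\cO(B\backslash BU^-)$ is a $\bbF[T_\alpha^q]$-linear combination of these, and applying $\Theta$ commutes with multiplication by $T_\alpha^q$. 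So the chart-stability conditions collapse to finitely many conditions, indexed by these monomials, which is exactly what (i) and (ii) record (with (i) being the $(i_\alpha)=(0)$ case, separated out because $1$ must go to a constant).

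Second, I would make explicit the change of coordinates between the chart $B\backslash BU^-$ and the other big cells $B\backslash BU^-w$. On overlaps, the transition functions are built from the $T_\alpha$ and their inverses; using the convention $T_\alpha=T_{-\alpha}^{-1}$ for $\alpha>0$, a function regular on $B\backslash BU^-w$ is, on the overlap, a Laurent-type expression whose denominators are controlled by bounded powers of the $T_\alpha$. The claim is that $\Theta$ preserves all $\cO(B\backslash BU^-w)$ if and only if $\Theta$ sends each of the finitely many test monomials into the finite-dimensional space $V=\bigoplus_{0\le j_\alpha\le q}\prod_{\alpha}T_\alpha^{j_\alpha}$. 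The "only if" direction is the easy one: if $\Theta\in D$ then in particular it preserves $\cO(B\backslash BU^-)=\bbF[T_\alpha\mid\alpha\in\Phi^-]$, so $\Theta(1)\in\bbF$ and $\Theta$ of a monomial is a polynomial; the degree bound $j_\alpha\le q$ comes from comparing with stability on the neighboring chart where $T_\alpha$ is inverted — a differential operator of the given type cannot raise the degree in $T_\alpha$ by more than what the $q$-linearity and the order of the operator allow, and one reads off the bound $q$ from Smith's computation in the rank-one case \cite[lemma 3.1]{Sm} applied coordinate by coordinate.

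Third, for the "if" direction I would argue that conditions (i) and (ii), together with $\bbF[T_\alpha^q]$-linearity, force $\Theta$ to preserve $\cO(B\backslash BU^-w)$ for every $w$: write an arbitrary regular function on that chart in the original coordinates, expand it $\bbF[T_\alpha^q]$-linearly over the test monomials times Laurent monomials in the $T_\alpha^q$, apply $\Theta$ termwise, and use (ii) to see that each term lands in $V$ times the appropriate Laurent monomial in the $T_\alpha^q$, which is exactly the span of functions regular on that chart. The bound $j_\alpha\le q$ in $V$ is precisely calibrated so that, after multiplying back by the $T_\alpha^{-q}$ coming from the chart change, no negative powers beyond those allowed on $B\backslash BU^-w$ appear. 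I expect the main obstacle to be bookkeeping the transition functions between the various big cells $B\backslash BU^-w$ in terms of the $T_\alpha$: one needs to check that only the simple-root coordinates genuinely get inverted and that the denominators stay within the bound $q$, which in the general reductive case requires a uniform statement that reduces, via the PBW/root-subgroup structure, to the $\SL_2$ situation treated by Smith. Once that reduction is in place, the rest is the finite-dimensionality argument sketched above, and one deduces the claim.
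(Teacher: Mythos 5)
Your overall strategy coincides with the paper's: use the description (\ref{sheaf_projective_covering}) of $D$ by chart stability, exploit $\bbF[T_\alpha^q]$-linearity to reduce to the finitely many test monomials, and calibrate the bound $q$ against the chart changes. Your sketch of the "if" direction is essentially the paper's computation; the paper makes the calibration precise by choosing, for each $\beta\in w(\Phi^-)$, the integer $m_\beta$ with $m_\beta q<i_\beta\le(m_\beta+1)q$ when $\beta>0$ and $m_\beta q\le i_\beta<(m_\beta+1)q$ when $\beta<0$, factoring out the corresponding $q$-th-power Laurent monomial, applying (ii) to the remaining low-degree monomial, and checking that multiplying back stays inside $\cO(B\backslash BU^-w)$.

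The "only if" direction, however, has genuine gaps. First, $\Theta(1)\in\bbF$ does not follow from $\Theta$ preserving $\cO(B\backslash BU^-)$; that only makes $\Theta(1)$ a polynomial. The paper gets the constant because $1$ is regular on every chart, so $\Theta(1)$ is a global section and $\cO(B\backslash G)=\bbF$. Second, and more seriously, you never actually derive the bound $j_\alpha\le q$ in (ii): you defer it to Smith's rank-one lemma "applied coordinate by coordinate", flag the chart bookkeeping as "the main obstacle", and assert that only the simple-root coordinates genuinely get inverted. That guiding claim is false: in the chart $B\backslash BU^-w_0$ attached to the longest element $w_0$ every coordinate $T_\alpha$, $\alpha\in\Phi^-$, is inverted, and a bound in the simple-root variables alone would not give condition (ii), which bounds all variables simultaneously. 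The paper's argument here is one short trick that your proposal is missing: for a test monomial $f=\prod_{\alpha<0}T_\alpha^{i_\alpha}$ with $0\le i_\alpha\le q-1$, the function $g=f\cdot\prod_{\alpha>0}T_\alpha^{q}$ is regular on $B\backslash BU^-w_0$, hence $\Theta(g)\in\cO(B\backslash BU^-w_0)$; by $\bbF[T_\alpha^q]$-linearity $\Theta(f)=\bigl(\prod_{\alpha<0}T_\alpha^{q}\bigr)\Theta(g)$, and $\prod_{\alpha<0}T_\alpha^{q}\,\cO(B\backslash BU^-w_0)\cap\cO(B\backslash BU^-)\subset V$ gives exactly the bound $0\le j_\alpha\le q$. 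No reduction to ${\rm SL}_2$ or per-coordinate analysis is needed; without this (or an equivalent) step the forward implication in your proposal does not close.
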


\begin{proof}
 $\Rightarrow:$ The first item follows from the sheaf property (\ref{sheaf_projective_covering}) since $\cO(B\backslash G)=\bbF$. Now let $\Theta \in D \cap D^1_q.$   Let $w_0\in W$ be the longest element and 
 $f=\prod_{\alpha<0} T_\alpha^{i_\alpha}$ as above. 
  Then $g=f \cdot\prod_{\alpha >0} T_\alpha^q  \in \cO(B\backslash BU^-{w_0})$. But then 
  $$\Theta(f)=(\prod_{\alpha<0 } T_\alpha^q)\Theta(g)\in   ( \prod_{\alpha} T_{\alpha<0}^q) \cO(B\backslash BU^-{w_0}) \cap  \cO(B\backslash BU^-)\subset V.$$

 $\Leftarrow:$ We show that  $\Theta(\cO(B\backslash BU^- w)) \subset \cO(B\backslash BU^-w)$ $\forall w\in W$. We consider the element $f=\prod_{\beta \in w(\Phi^-)} T^{i_\beta}_\beta \in \cO(B\backslash BU^-{w}).$  Write  
 $$f=\prod_{\beta \in w(\Phi^-) \atop \beta <0} T^{i_\beta}_\beta  \prod_{\beta \in w(\Phi^-) \atop \beta >0} T^{i_\beta}_\beta= \prod_{\beta \in w(\Phi^-) \atop \beta <0} T^{i_\beta}_\beta  \prod_{\beta \in w(\Phi^-) \atop \beta >0} T^{-i_\beta}_{-\beta}.$$ 
 For each $\beta>0$ let $m_\beta$ be the integer with $m_\beta q < i_\beta  \leq (m_\beta +1)q.$
 On the other hand,  for each $\beta<0$ let $m_\beta$ be the integer with $m_\beta q \leq i_\beta  < (m_\beta +1)q.$
  Then $\prod_{\beta\in w(\Phi^-) \atop \beta <0 } T_\beta^{i_\beta} =    
 \prod_{\beta\in w(\Phi^-) \atop \beta <0 } T_\beta^{m_\beta q} T_\beta^{i_\beta- m_\beta q}.$
    Putting this together we get by assumption (ii)
    $$\Theta(\prod_{\beta\in w(\Phi^-) \atop \beta >0 } T_{-\beta}^{(m_\beta+1)q-i_\beta} \prod_{\beta\in w(\Phi^-) \atop \beta <0 } T_\beta^{i_\beta- m_\beta q}) \in V.$$
  Thus $\Theta (f) \in  \prod_{\beta\in w(\Phi^-) \atop \beta >0 } T_{-\beta}^{-(m_\beta+1)q}  \prod_{\beta\in w(\Phi^-) \atop \beta <0 } T_\beta^{m_\beta q} V \subset \cO(B\backslash BU^-{w}).$ 
 \end{proof}

We fix the same setup as in the previous section. I.e.
 ${\bf P\subset  G}$ is a parabolic subgroup,  ${\bf U_P}$ its unipotent radical and  ${\bf U^-_P}$ its opposite unipotent radical. Moreover we have fixed as before  lifts  ${\bf P_{\bbZ}}$ etc. inside ${\bf G_{\bbZ}}$. We consider
 the following subalgebras of $D$ in terms of generators:

$D(P)=\langle T_\alpha^m\cdot  y_{\alpha}^{[n]}\in D  \mid m \leq n \mbox{ for }  y_{\alpha} \in \frp \cap \frb^-, m \geq n \mbox{ for }  L_{- \alpha} \in \fru \rangle.$

$D(U_P)=\langle (T_\alpha)^m\cdot  y_{\alpha}^{[n]}\in D  \mid m > n, L_{-\alpha} \in \fru_P \rangle.$

$D(U_P^-)=\langle (T_\alpha)^m\cdot  y_{\alpha}^{[n]}\in D  \mid m < n, y_{\alpha} \in \fru_P^- \rangle.$

$D(L_P)=\langle (T_\alpha)^m\cdot  y_{\alpha}^{[n]}\in D  \mid m \leq n \mbox{ for }  y_{\alpha} \in \frl_P \cap \frb^-, m > n \mbox{ for }  L_{- \alpha} \in \frl_P \cap \fru \rangle.$

$D(T)=\langle (T_\alpha)^m\cdot  y_{\alpha}^{[n]}\in D  \mid m = n,  \alpha \in \Delta \rangle.$

\medskip

\begin{rmk} 
i)  Note that  $D(T)$ is  for $p\neq 2$ nothing else but $\pi^{\cO_X}(\dUt)$ as $T_\alpha y_\alpha=\pi(2h_\alpha)$ for all $\alpha \in \Delta.$ Hence if $\lambda \in X^\ast(T)$, it induces a $D(T)$-module structure  on $\bbF$ which we denote by $\bbF_\lambda.$


ii) By Lemma \ref{lemma_diff} one checks that $D(U_P)=\pi^{\cO_X}(\dUup)$ since $T_\alpha^2 y_\alpha =\pi(L_{-\alpha}) \forall \alpha\in \Phi^-.$
 \end{rmk}

\begin{lemma} There is for all $n\in \bbN$ and $\alpha \in \Delta$ the identity
$ \left( T_\alpha y_\alpha \atop  n \right)=T^n_\alpha y_\alpha^{[n]}.$
\end{lemma}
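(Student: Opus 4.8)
The statement is an identity in the ring $D^1 = D(B\backslash BU^-)$ between the binomial operator $\binom{T_\alpha y_\alpha}{n}$ and the product $T_\alpha^n y_\alpha^{[n]}$, for $\alpha$ simple. The natural approach is to check that both sides are differential operators of the same order and that they agree on a spanning set of functions, namely the monomials $T_\alpha^m$ in the single variable $T_\alpha$ (both operators are built from $T_\alpha$ and $y_\alpha = \partial/\partial T_\alpha$ only, so it suffices to work in the one-variable crystalline Weyl algebra $\bbF[T_\alpha]\langle y_\alpha^{[n]}\rangle$). First I would record the basic commutation relation: $y_\alpha^{[n]}\cdot T_\alpha^m = \binom{m}{n} T_\alpha^{m-n}$, i.e. $y_\alpha$ acts as the Hasse–Schmidt/divided derivative. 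Hence the right-hand side sends $T_\alpha^m \mapsto \binom{m}{n} T_\alpha^m$.

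**Main step.** For the left-hand side, $T_\alpha y_\alpha$ is the Euler operator: $T_\alpha y_\alpha \cdot T_\alpha^m = T_\alpha \cdot (m\, T_\alpha^{m-1}) = m\, T_\alpha^m$, so $T_\alpha^m$ is an eigenvector of $T_\alpha y_\alpha$ with eigenvalue $m$. Therefore $\binom{T_\alpha y_\alpha}{n}$, interpreted as the polynomial $\tfrac{1}{n!}\prod_{i=0}^{n-1}(T_\alpha y_\alpha - i)$ in the operator $T_\alpha y_\alpha$ (which makes sense because the generators $\binom{h_\alpha}{n}$ lie in $\dot{\cU}(\frg)$ and $\pi^{\cO_X}(h_\alpha)$ relates to $T_\alpha y_\alpha$ as noted in the preceding Remark, up to the factor $2$ when $p\neq 2$; more simply, one checks directly that $\binom{T_\alpha y_\alpha}{n}$ is the operator with these eigenvalues), acts on $T_\alpha^m$ by the scalar $\tfrac{1}{n!}\prod_{i=0}^{n-1}(m-i) = \binom{m}{n}$. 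Thus both sides act identically on every monomial $T_\alpha^m$, $m\in\bbZ$ (including negative $m$, using $T_\alpha = T_{-\alpha}^{-1}$), and since these monomials span a dense enough subspace — indeed they span $\cO(B\backslash BU^-)$ after adjoining the other coordinates, on which neither operator does anything — the two differential operators coincide in $D^1$, hence in $D$.

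**The obstacle.** The one genuinely delicate point is making sense of $\binom{T_\alpha y_\alpha}{n}$ as an element of $D$ and matching it with the generator $\binom{h_\alpha}{n}$ of $\dot{\cU}(\frg)$: in characteristic $0$ one has $\binom{h_\alpha}{n} \otimes 1$ mapping to $\binom{T_\alpha y_\alpha}{n}$ under $\pi^{\cO_X}$ only after accounting for the relation $\pi^{\cO_X}(h_\alpha)$ vs. $T_\alpha y_\alpha$ (the Remark flags a factor of $2$), so one must be careful that the binomial coefficient of the operator $T_\alpha y_\alpha$ is the right integral polynomial and actually lies in the crystalline algebra. I would handle this by simply \emph{defining} the left-hand side to be $\tfrac{1}{n!}\prod_{i=0}^{n-1}(T_\alpha y_\alpha - i)$ acting on $\cO$, verifying by the eigenvalue computation above that it is the operator $T_\alpha^m \mapsto \binom{m}{n}T_\alpha^m$, and noting this is manifestly $\bbF$-rational (the eigenvalues $\binom{m}{n}$ are integers) and lies in $D$ by Lemma \ref{lemma_diff} — for instance it equals $T_\alpha^n y_\alpha^{[n]}$, whose membership in $D$ is the content of what we are proving, so the cleanest logical order is: show both operators have the stated action on monomials, conclude they are equal as elements of $D^1$, and then either side is automatically in $D$. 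A small bookkeeping check is the negative-exponent case, where one uses $y_\alpha^{[n]} T_{-\alpha}^{-m} = \binom{-m}{n} T_{-\alpha}^{-m}$ with $\binom{-m}{n} = (-1)^n\binom{m+n-1}{n}$ and the matching identity $\prod_{i=0}^{n-1}(-m-i)/n! = \binom{-m}{n}$; this is routine.
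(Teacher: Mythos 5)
The paper leaves this lemma to the reader, and your argument is the natural intended one: both sides live in a one-variable crystalline Weyl algebra and act on every monomial $T_\alpha^m$ ($m\in\bbZ$, using $T_\alpha=T_{-\alpha}^{-1}$ for $\alpha>0$) by the same scalar $\binom{m}{n}$, and since differential operators are determined by their action on functions the two elements coincide in $D^1$, hence in $D$. Your handling of the only delicate point — interpreting $\binom{T_\alpha y_\alpha}{n}$ in characteristic $p$ by working over $\bbQ$/$\bbZ$, observing the eigenvalues $\binom{m}{n}$ are integers so the operator lies in the integral divided-power form, and then reducing mod $p$, exactly as for the generators $\binom{h_\alpha}{n}$ of $\dUt$ — is correct and is precisely what makes the statement meaningful here.
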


\begin{proof}
 This is left to the reader.
\end{proof}

We set $D^\cE(P)=\cE(X) \otimes D(P) \otimes \cE^\ast(X)$ etc. Then there is a product decomposition $D^\cE=D^\cE(P)D^\cE(U_P^-)$ (an almost PBW-decomposition).

Again we mimic the definition of the category $\cO$ in the sense of BGG. Let $\cO^{P}_{D^\cE}$ be the category of  $D^\cE$-modules such that

i) $M$ is finitely generated as a $D^\cE$-module 

ii) As a $D^\cE(L_P)$-module it is a direct sum of finite-dimensional modules.

iii) $D^\cE(U_P)$  acts locally finite-dimensional, i.e. for all $m\in M$ the subspace $D^\cE(U_P)\cdot v$ is finite-dimensional.

\medskip

\begin{rmk}
 For $\cE=\cO_X$ this category corresponds in analogy to the classical case to the principal block.
\end{rmk}

We define the algebraic part of $\cO^P_{D^\cE,\alg}$ as usual, i.e. we denote by $\cO_{D^\cE,\alg}^P$ the full subcategory of $\cO_{D^\cE}^P$ consisting of objects such that the action of $\dUt$ on the weight spaces is given by algebraic characters $\lambda \in X^\ast(T).$  Note that axioms ii) and iii) induce together with
the map $\pi^\cE: \dUg \to D^\cE$ an algebraic  $P$-module structure on any object in $\cO^P_{D^\cE,\alg}$.


As in the classical case we see that the axioms imply the existence of a finite-dimensional $D^\cE(P)$-module $N$ which generates $M$ as a $D^\cE$-module.  Further there are similar definitions. E.g. a  vector in an $D^\cE$-module $M\in \cO_{D^\cE}$ is called a maximal vector of weight $\lambda \in \frt^\ast$  if $v \in M_\lambda$ and  $D^\cE (U_P)\cdot  v = 0.$
 A $D^\cE$-module $M$ is called  a highest weight module of
weight $\lambda$ if there is a maximal vector $v \in  M_\lambda$  such that 
$M = D^\cE \cdot v.$
By the very definition such a module satisfies 
$M = D^\cE(U^-_B) \cdot  v.$ 
For a   one-dimensional $\dUt$-module $\lambda$  we consider it as usual via the trivial $D^\cE(U_B)$-action as a one-dimensional $D^\cE(B)$-module $\bbF_\lambda$ and set
$M(\lambda)=D^\cE \otimes_{D^\cE(B)} \bbF_\lambda. $
More generally we may define for every finite-dimensional $D^\cE(P)$-module $W$ the generalized Verma module
$M(W)=D^\cE \otimes_{D(P)} W.$
Note that we have surjections
$ D^\cE(U^-_{B}) \otimes {\bar \bbF}_\lambda \to M(\lambda)$
and
$D^\cE(U^-_{P}) \otimes_\bbF W \to M(W).$
We see by the above surjections that \cite[Thm. 1.3]{Hu} holds true in our category, i.e. if $M(\lambda)\neq 0$ then it has a unique simple quotient $L(\lambda)$.  Moreover these modules form a complete list  of simple modules in the "union" of  our categories $\cO_{D^\cE}.$

Consider the local cohomology module $\tilde{H}^{d-j}_{\bbP^j}(\bbP^d,\cO).$
For $d-j\geq 2$ this coincides with the vector space of polynomials  
$$\bigoplus_{{n_0,\ldots,n_j \geq 0 \atop n_{j+1} \ldots  n_d < 0} \atop \sum_i n_i=0} \bbF \cdot X_0^{n_0}\cdots X_j^{n_j} X_{j+1}^{n_{j+1}} \cdots X_d^{n_d}$$ cf. \cite{O2}. 
In general there is some finite-dimensional ${\bf P_{(j+1,d-j)}}$-module $V$ such that $\tilde{H}^{d-j}_{\bbP^j}(\bbP^d,\cE)$ is a quotient of $\bigoplus_{{n_0,\ldots,n_j \geq 0 \atop n_{j+1} \ldots  n_d \leq 0} \atop \sum_i n_i=0} \bbF \cdot X_0^{n_0}\cdots X_j^{n_j} X_{j+1}^{n_{j+1}} \cdots X_d^{n_d} \otimes V.$

\begin{prop}
Let $\cE$ be a homogeneous vector bundle on $\bbP^d_\bbF$.  Then  $\tilde{H}^{d-j}_{\bbP^j}(\bbP^d,\cE)$ is an object of $\cO^{P_{(j+1,d-j)}}_{D^\cE} $.
\end{prop}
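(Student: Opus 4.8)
The strategy is to verify the three axioms defining $\cO^{P_{(j+1,d-j)}}_{D^\cE}$ for the module $M:=\tilde{H}^{d-j}_{\bbP^j}(\bbP^d,\cE)$, writing $P:=P_{(j+1,d-j)}$ for brevity. The key structural input is the explicit description given just above the statement: $M$ is a quotient (as $\bf P$-module, and compatibly with the $D^\cE$-action) of
$$N:=\bigoplus_{\substack{n_0,\ldots,n_j \geq 0,\ n_{j+1},\ldots,n_d \leq 0\\ \sum_i n_i = 0}} \bbF \cdot X_0^{n_0}\cdots X_d^{n_d} \otimes V$$
for a finite-dimensional algebraic $\bf P$-module $V$. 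Since $\cO^P_{D^\cE}$ is closed under quotients (the axioms i)--iii) obviously pass to quotients: finite generation, being a direct sum of finite-dimensional $D^\cE(L_P)$-modules, and local finiteness of $D^\cE(U_P)$ all descend), it suffices to prove $N \in \cO^P_{D^\cE}$, and in fact I would first treat the case $\cE=\cO$, $V=\bbF$, then tensor.

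\textbf{Axiom ii) and the $D^\cE(L_P)$-action.} Decompose $N$ according to the total degree $e := \sum_{i=0}^j n_i = -\sum_{i=j+1}^d n_i \geq 0$. Each graded piece $N_e$ is finite-dimensional: the numerator monomials $X_0^{n_0}\cdots X_j^{n_j}$ with $n_i\ge 0$ summing to $e$ form a finite set, likewise the denominator monomials, and tensoring with the finite-dimensional $V$ keeps it finite. The point is that $D^\cE(L_P)$ — built from the operators $T_\alpha^m y_\alpha^{[n]}$ with $m\le n$ or $m>n$ according as $\alpha$ lies in $\frl_P\cap\frb^-$ or $\frl_P\cap\fru$, together with the $\cE(X)\otimes(-)\otimes\cE^\ast(X)$ factors — preserves this degree grading, because the Levi $L_P = \GL_{j+1}\times\GL_{d-j}$ acts on the first block of coordinates $X_0,\ldots,X_j$ and on the second block $X_{j+1},\ldots,X_d$ separately, hence preserves $e$. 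So $N = \bigoplus_{e\ge 0} N_e$ is a direct sum of finite-dimensional $D^\cE(L_P)$-modules. (For the weight-space statement underlying the category, one also notes that the $\dUt$-weights on each $N_e$ are algebraic characters of $T$, so $N \in \cO_{D^\cE,\alg}$ once the other axioms hold; this is immediate from the formula for monomial weights.)

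\textbf{Axiom iii) and Axiom i).} For iii): $D^\cE(U_P) = \pi^\cE(\dUup)$ by the Remark (for $\cE=\cO$ this is exactly Remark part ii), and the general case follows by tensoring), and $\fru_P$ raises the degree $e$; more precisely each generator $x_\alpha\in(\fru_P)_\alpha$ with $\alpha$ a positive root not in $\Phi_{L_P}$ moves weight up by $\alpha$ and strictly increases $e$, while divided powers $x_\alpha^{[n]}$ annihilate any fixed monomial for $n$ large (the second-block exponents $n_{j+1},\ldots,n_d$ are $\le 0$ and bounded below in absolute value on any element, so raising the index too high forces a forbidden positive exponent in the second block, hence zero). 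Thus $D^\cE(U_P)\cdot v$ is finite-dimensional for every $v$; this is precisely the local finiteness used throughout Section 3, e.g. in Lemma~\ref{lemma1}(i). For i), finite generation: using the almost-PBW decomposition $D^\cE = D^\cE(P)D^\cE(U_P^-)$ and the fact that $\fru_P^-$ lowers $e$, one shows $N$ is generated over $D^\cE$ by $N_0$, which is finite-dimensional — indeed $N_0 = \bbF\otimes V$ (only the constant monomial has $e=0$), and applying the operators $T_\alpha^m y_\alpha^{[n]}$ with $m<n$ (the generators of $D^\cE(U_P^-)$, corresponding to $y_\alpha\in\fru_P^-$) to the constant function produces all of $N$ by the explicit action formula (\ref{Strukturgarbe}), so $N = D^\cE(U_P^-)\cdot N_0$. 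Here one must be slightly careful that the operators in question genuinely land in $D$ and not merely in $D^1$ — but the relevant $T_\alpha^m y_\alpha^{[n]}$, $m<n$, were defined to lie in $D(U_P^-)\subset D$.

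\textbf{Main obstacle.} The routine parts are the degree bookkeeping; the genuine point requiring care is the passage from $\cE=\cO_X$ to a general homogeneous vector bundle $\cE$. One must check that the identification $\tilde H^{d-j}_{\bbP^j}(\bbP^d,\cE)$ as a $D^\cE$-module-quotient of $N\otimes V$ is compatible with the three subalgebras $D^\cE(L_P), D^\cE(U_P), D^\cE(U_P^-)$ in the way used above, i.e. that the finite-dimensional twist $V$ does not interfere with the degree filtration argument. Since $V$ is finite-dimensional and the $D^\cE(L_P)$-, $D^\cE(U_P)$-actions on it are algebraic (factoring through $\bf P$), tensoring with $V$ multiplies each finite-dimensional graded piece by $\dim V$ and leaves local finiteness intact; and the generation statement $N\otimes V = D^\cE(U_P^-)\cdot(N_0\otimes V)$ still holds because $N_0\otimes V$ already contains a full set of highest-weight generators. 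So the obstacle is more bookkeeping than conceptual — the one thing I would double-check explicitly is that $D^\cE(U_P)$ acting on $N\otimes V$ is still locally finite when $V$ carries a non-trivial $U_P$-action (it is, since $V$ is finite-dimensional and $U_P$ acts unipotently, so a high divided power of any $x_\alpha$ kills $V$ outright).
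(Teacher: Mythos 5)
The routine parts of your proposal (axioms ii) and iii), passage to quotients, tensoring with the finite-dimensional $V$) are fine and correspond to what the paper also treats as unproblematic. The genuine gap is exactly at the point the paper itself calls the non-trivial aspect: finite generation. You assert that $N=D^\cE(U_P^-)\cdot N_0$ with $N_0=\bbF\otimes V$ ``by the explicit action formula (\ref{Strukturgarbe})'', but in characteristic $p$ that formula is the obstacle rather than the proof: the coefficient $m_j$ in $L_{(i,j)}\cdot \Xi_\mu=m_j\,\Xi_{\mu+\alpha_{i,j}}$ vanishes whenever $p\mid m_j$, and this mod-$p$ vanishing is precisely why these local cohomology modules fail to be finitely generated over $U(\frg)$, as stressed in the introduction. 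Your choice of generator makes things worse: the generators $T_\alpha^m y_\alpha^{[n]}$ (with $n\geq 1$) of $D^\cE(U_P^-)$ are multiplication operators composed with positive-order divided-power operators, and $y_\alpha^{[n]}$ kills constants; hence on the natural action $D^\cE(U_P^-)\cdot N_0$ does not grow beyond $N_0$, and in any case the claim that one sweeps out all of $N$ from the constant is unsupported.

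The paper's proof is genuinely different at this point. It takes as generating set the finite-dimensional space of monomials $X_0^{n_0}\cdots X_j^{n_j}X_{j+1}^{-1}\cdots X_d^{-1}$ with $\sum_{i\leq j}n_i=d-j$, and establishes generation by a staged induction on the depth of the denominators: the first-order operators $L_\alpha\in \fru_{P_{(j+1,d-j)}}^-$ yield all monomials with $|n_i|\leq p$ for $i\geq j+1$; the divided powers $y^{[p]}$ produce the deeper denominators (e.g.\ $n_i=-(p+1)$) but not all accompanying numerators; and the decisive step uses the operators $T_{(a,b)}^{p-1}L_{(a,b)}^{[p]}$, which lie in $D$ but not in the image of $\dUg$ (this is where Smith's non-surjectivity of $\pi^\cE$ becomes relevant), to realize the missing numerators, iterating for exponents of size $rp+1$. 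Some version of this divided-power/extra-operator argument is the mathematical content of the proposition; without it your verification of axiom i) does not go through. (A secondary point to make explicit: you use that the presentation of $\tilde H^{d-j}_{\bbP^j}(\bbP^d,\cE)$ as a quotient of $N\otimes V$ is a quotient of $D^\cE$-modules and that $\cO^{P}_{D^\cE}$ is closed under quotients; the paper tacitly assumes this too, but it should be stated.)
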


\begin{proof}
The non-trivial aspect is to show that $\tilde{H}^{d-j}_{\bbP^j}(\bbP^d,\cE)$ is finitely generated. We will show this for $\cE=\cO$.  We claim that 
 $$\bigoplus_{n_0,\ldots,n_j \geq 0 \atop \sum_{i=0}^j n_i=d-j} \bbF \cdot X_0^{n_0}\cdots X_j^{n_j} X_{j+1}^{-1} \cdots X_d^{-1}$$
 is as in characteristic $0$  a generating system of $H^{d-j}_{\bbP^j}(\bbP^d,\cO).$ Indeed, as in the latter case we can apply successively the differential operators $L_\alpha\in \fru_{P_{(j+1,d-j)}}^-$ to obtain all expressions $X_0^{n_0}\cdots X_j^{n_j} X_{j+1}^{n_{j+1}} \cdots X_d^{n_d}$ such that $|n_i|\leq p$ for all $i \geq j+1.$ 
  In order to obtain those where $n_i=-(p+1)$ for some $i\geq j+1$  we can apply  $y^{[p]}_{(-,j+1)}$ to get the desired denominators. However, we do not get all  possible nominators. 
   But in our algebra $D$ we have in contrast to $\dUg$  the differential operator $T_{(a,b)}^{p-1} L_{(a,b)}^{[p]}$ with $j+1 \leq a <b \leq d$ at our disposal. Applying these operators we    can realize all nominators. 
 For $|n_i|>p+1$ in particular for $|n_i|=rp+1, r \geq 2$ we use the same method as above etc..
\end{proof}

\begin{prop}
 The object $\tilde{H}^i_{\bbP^j}(\bbP^d,\cO)$ is a simple module  isomorphic to  $L(s_i\cdots s_1 \cdot 0).$
  \end{prop}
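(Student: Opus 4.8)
The plan is to exhibit $M:=\tilde H^{i}_{\bbP^j}(\bbP^d,\cO)$ (with $i=d-j$, the unique degree in which the local cohomology of $\cO$ is non-zero) as a highest weight module with an explicit maximal vector, and then to show directly that it has no proper non-zero submodule. By the monomial description recalled above, every weight vector of $M$ is, up to scalar, a monomial $X_0^{n_0}\cdots X_d^{n_d}$ with $n_0,\dots,n_j\ge 0$, $n_{j+1},\dots,n_d<0$ and $\sum_l n_l=0$; distinct such monomials have distinct weights, so all weight spaces of $M$ are one-dimensional. A direct computation with (\ref{Strukturgarbe}) shows that $v^+:=X_j^{d-j}X_{j+1}^{-1}\cdots X_d^{-1}$ is annihilated by $\fru$: for $i>j'$ the element $L_{(i,j')}\cdot v^+$ vanishes, either because the relevant exponent is $0$ (when $j'<j$) or because the resulting monomial has a non-negative exponent in one of the slots $j+1,\dots,d$, hence vanishes in $M$ (when $j'\ge j$). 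The weight of $v^+$ is $\lambda_0=(d-j)\epsilon_j-\sum_{l>j}\epsilon_l$, and all weights of $M$ lie in $\lambda_0+\bbZ_{\ge0}\Phi^+$; combined with one-dimensionality of the weight spaces this forces the augmentation ideal of $\dUu$, hence of $D(U_B)$ (the image of $\dUu$ in $D$), to annihilate $v^+$, so $v^+$ is a maximal vector of weight $\lambda_0$.

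Next I would show $M$ is generated by $v^+$. Applying to $v^+$ the operators $L_{(a,b)}$ with $a<b\le j$, together with their crystalline divided powers $L_{(a,a+1)}^{[n]}\in\dUg$ (available since the $\alpha_{a,a+1}$ are simple), one obtains all the monomials $X_0^{n_0}\cdots X_j^{n_j}X_{j+1}^{-1}\cdots X_d^{-1}$ with $\sum_{l\le j}n_l=d-j$; by the preceding proposition these generate $M$ over $D$. Hence $M=D\cdot v^+$ is a highest weight module of weight $\lambda_0$, and by the analogue of \cite[Thm.~1.3]{Hu} recalled above it has a unique simple quotient $L(\lambda_0)$.

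To prove $M$ is simple, let $0\ne N\subseteq M$ be a $D$-submodule. Since $D(T)$ acts semisimply, $N$ is a sum of weight spaces, hence spanned by the monomials it contains; pick one, $X^n\in N$, and reduce it to $v^+$ using only operators $L_{(i,j')}\in\fru\subset D$. First, as long as $n_l>0$ for some $l<j$, apply $L_{(j,l)}=X_j\partial_{X_l}$: this is a non-zero scalar times a monomial of $M$ and strictly decreases $n_l$, so after finitely many steps the numerator lies entirely in slot $j$, i.e. the element is $X_j^{N}X_{j+1}^{-m_{j+1}}\cdots X_d^{-m_d}$ with $N=\sum_{l>j}m_l\ge d-j$. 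Second, as long as some $m_l\ge 2$ with $l>j$, apply $L_{(l,j)}=X_l\partial_{X_j}$: since $N\ge 1$ and $-m_l+1\le -1$ this is again a non-zero scalar times a monomial of $M$, and it strictly decreases $m_l$ and $N$; after finitely many steps all $m_l=1$, so $N=d-j$ and the element is $v^+$. Therefore $v^+\in N$, whence $N\supseteq D\cdot v^+=M$ and $M=L(\lambda_0)$.

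It remains to identify $\lambda_0$ with $s_i\cdots s_1\cdot 0$. Let $\rho$ be the half-sum of the roots occurring in $\fru$, so that $\rho\equiv(0,1,\dots,d)$ modulo the centre; then $\lambda_0+\rho=(0,1,\dots,j-1,d,j,j+1,\dots,d-1)$ is the image of $\rho$ under the cyclic permutation $(j\;j{+}1\;\cdots\;d)=s_{\alpha_{j,j+1}}s_{\alpha_{j+1,j+2}}\cdots s_{\alpha_{d-1,d}}$, a product of $d-j=i$ simple reflections. Hence $\lambda_0=s_i\cdots s_1\cdot 0$ in the notation of the statement, and $M\cong L(s_i\cdots s_1\cdot 0)$. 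The main obstacle here is bookkeeping rather than depth: one has to pin down conventions consistently — which Borel (hence which monomial is the maximal vector) and which normalisation of the dot action makes $\lambda_0=s_i\cdots s_1\cdot 0$ — and one has to make sure that the divided-power operators used to see that $v^+$ generates $M$ really lie in $D$, which is exactly what the preceding proposition supplies; the simplicity argument itself is elementary, using only the adjoint $\frg$-action.
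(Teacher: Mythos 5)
Your overall skeleton (exhibit a maximal vector, show it generates, show every non-zero submodule contains it, then match the weight with $s_i\cdots s_1\cdot 0$) is the natural one, and it is essentially the route the paper takes implicitly by transferring the characteristic-$0$ argument of \cite[Prop.\ 7.5]{OS}. But your simplicity step has a genuine characteristic-$p$ gap exactly at the point where the whole section lives. You reduce an arbitrary monomial of a submodule $N$ to $v^+$ ``using only operators $L_{(i,j')}\in\fru\subset D$'', and you assert each application is ``a non-zero scalar times a monomial''. By (\ref{Strukturgarbe}) the scalar is the current exponent in the slot being lowered, so it vanishes in $\bbF$ whenever that exponent is divisible by $p$; since $M$ is infinite-dimensional, $N$ may well be spanned by monomials all of whose reducible exponents are $\equiv 0 \pmod p$, and then your first-order reduction produces $0$ and stalls. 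This is not a cosmetic issue: the failure of first-order (and even plain divided-power) operators to move freely between monomials is precisely why the preceding proposition had to invoke the operators $T_{(a,b)}^{p-1}L_{(a,b)}^{[p]}\in D$, which are not in the image of $\dUg$, and why the paper works in $\cO_{D^\cE}$ at all. The same caveat infects your second reduction step even if you upgrade to divided powers: $L_{(l,j)}^{[m_l-1]}$ produces the scalar $\binom{N}{m_l-1}$, which can also vanish mod $p$ (e.g.\ $N=p$, $m_l=2$), so one must either choose the exponents of the divided powers so that the relevant binomial coefficients are units (Lucas-type bookkeeping) or, as the paper does, use the extra operators of $D$ of the form $T_\alpha^{m}y_\alpha^{[n]}$ that shift exponents by one with unit scalars.

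So the argument is repairable, but only by importing exactly the $D$-operators and the mod-$p$ scalar control that you set aside in the simplicity step; as written, the claim ``non-zero scalar'' is false and the proof of simplicity does not go through. (Your generation step has the same latent issue — you should check the binomial coefficients arising from $L_{(a,j)}^{[n_a]}$, or again use the operators supplied by the preceding proposition — and the weight identification is fine up to the convention bookkeeping you acknowledge.) For comparison, the paper's own proof is a one-line reduction to the characteristic-$0$ computation in \cite[Prop.\ 7.5]{OS}, with the remark that the argument carries over because the needed differential operators are available in $D$; your write-up would be a genuinely more self-contained alternative once the scalar-vanishing problem is addressed.
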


\begin{proof}
In characteristic $0$ we  gave a proof in \cite[Prop. 7.5]{OS}.
Here we can argue with the differential operators at our disposal in the same way. Note that for general $\lambda \in X^\ast(T)$ the simple module $L(\lambda)$ is an avatar of the characteristic $0$ version.
 \end{proof}

We let 
$$\cA^\cE_G:=\bbF[G]\# D^\cE$$ be the smash product of the group algebra $\bbF[G]$ and $D^\cE.$

Let $M$ be an object of $\cO_{D^\cE, \alg}^P$ and let $V$ be a finite-dimensional $P$-module. Then we set
$$\cF^G_P(M,V):=\bbF[G] \otimes_{\bbF[P]} (M \otimes V).$$ 
Note that $\cF^G_P(M,V)=\Ind^G_P(M \otimes V)$.
This is a $\cA^\cE_G$-module. In this way we get a bi-functor
$$\cF^G_P:\cO_{D^\cE,\alg}^P \times \Rep(P) \to Mod_{\cA^\cE_G}.$$

The proof of the next statement is the same as in Propositions \ref{exact} and \ref{PQ}.

\medskip
\begin{prop} a) The bi-functor $\cF^G_P$ is exact in both arguments.

\noindent b) If $Q \supset P$ is a parabolic subgroup, 
$M$ an object of $\cO_{D^\cE,\alg}^Q$, then
$$\cF^G_P(M,V) = \cF^G_Q(M,\Ind^{Q}_{P}(V)) \,,$$
\noindent where $\Ind^Q_P(V)$  denotes the corresponding induced representation. \qed
\end{prop}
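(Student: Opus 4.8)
The plan is to reduce both assertions to elementary facts about ordinary induction between the finite groups $P\subset Q\subset G$, following the proofs of Propositions \ref{exact} and \ref{PQ}; the present situation is in fact cleaner, since by construction $\cF^G_P(M,V)=\Ind^G_P(M\otimes V)$ with no graded dual interposed.

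For part a) I would first record that $\Ind^G_P=\bbF[G]\otimes_{\bbF[P]}(-)$ is exact, because $G$ is finite and hence $\bbF[G]$ is free as a right $\bbF[P]$-module, and that it carries a short exact sequence of $\bbF[P]\# D^\cE$-modules to a short exact sequence of $\cA^\cE_G$-modules. It then remains to observe that $(M,V)\mapsto M\otimes V$ is exact in each slot: tensoring over the field $\bbF$ is exact, and a short exact sequence $0\to M_1\to M_2\to M_3\to 0$ in $\cO^P_{D^\cE,\alg}$ (resp. $0\to V_1\to V_2\to V_3\to 0$ in $\Rep(P)$), after applying $(-)\otimes V$ (resp. $M\otimes(-)$), remains short exact as a sequence of modules for the diagonal $P$-action and for the $D^\cE(P)$-action through the first factor. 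Composing with $\Ind^G_P$ yields exactness of $\cF^G_P$ in both arguments.

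For part b) I would restrict $M\in\cO^Q_{D^\cE,\alg}$ to an object of $\cO^P_{D^\cE,\alg}$ (the defining axioms pass to the smaller parabolic exactly as in the classical case) and then chain the identities
\begin{align*}
\cF^G_P(M,V)&=\Ind^G_P(M\otimes V)=\Ind^G_Q\bigl(\Ind^Q_P(M\otimes V)\bigr)\\
&=\Ind^G_Q\bigl(M\otimes\Ind^Q_P(V)\bigr)=\cF^G_Q\bigl(M,\Ind^Q_P(V)\bigr),
\end{align*}
where the second equality is transitivity of induction, $\bbF[G]\otimes_{\bbF[P]}(-)=\bbF[G]\otimes_{\bbF[Q]}\bigl(\bbF[Q]\otimes_{\bbF[P]}(-)\bigr)$, and the third is the projection formula $\bbF[Q]\otimes_{\bbF[P]}(M\otimes V)\cong M\otimes\bigl(\bbF[Q]\otimes_{\bbF[P]}V\bigr)$, valid since $M$ is a $Q$-module — the same computation as in Proposition \ref{PQ}, minus the dualization.

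I expect the only point carrying content to be the verification that the transitivity and projection-formula isomorphisms, which are obviously $G$-equivariant, are also compatible with the $D^\cE$-action, i.e.\ are isomorphisms of $\cA^\cE_G$-modules. Since on both sides the $D^\cE$-action factors through the first tensor factor $M$, this is a routine check, entirely parallel to the implicit one in the proof of Proposition \ref{PQ}, where $\dUg$ plays the role now played by $D^\cE$.
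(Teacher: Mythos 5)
Your proposal is correct and follows essentially the same route the paper intends: since $\cF^G_P(M,V)=\Ind^G_P(M\otimes V)$ with no graded dual in this setting, exactness reduces to exactness of $\bbF[G]\otimes_{\bbF[P]}(-)$ and of $\otimes_\bbF$, and part b) is the transitivity-plus-projection-formula computation of Proposition \ref{PQ} minus the dualization, which is exactly what the paper's reference to Propositions \ref{exact} and \ref{PQ} amounts to. Your added remark on compatibility of these identifications with the $D^\cE$-action is a sensible explicit check that the paper leaves implicit.
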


\vskip5pt

\begin{thm}
 Let $M$ be an simple object of $\cO^P_{D^\cE,\alg}$ such that $P$ is maximal for $M$ and let $V$ be a simple $P$-representation. Then
 $\cF^G_P(M,V)$ is simple as  $\cA^\cE_G$-module.
\end{thm}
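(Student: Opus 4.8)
The plan is to transport the proofs of Theorems~\ref{thm_irred} and~\ref{thm_irred_V} to the $\cD$-module setting, with $D^\cE$ and its subalgebras $D^\cE(P),D^\cE(U_P^-),D^\cE(L_P)$ taking over the role of $\dUg$ and its PBW factors. Since here $\cF^G_P(M,V)=\Ind^G_P(M\otimes V)$ is already an induced module, no passage to a graded dual is needed and the argument should be, if anything, more direct. First I would, using that $G$ acts on $X=\mathbf{B}_\bbF\backslash\mathbf{G}_\bbF$ and hence on $D^\cE$ (this is what makes $\cA^\cE_G=\bbF[G]\# D^\cE$ a smash product), restrict $\cF^G_P(M,V)$ to $D^\cE$: choosing coset representatives via the Bruhat stratification $G/P=\bigsqcup_{w\in W_P}U_{B,w}^-\,wP/P$ as in the proof of Theorem~\ref{thm_irred}, one obtains a $D^\cE$-module isomorphism
\begin{equation*}
\cF^G_P(M,V)\;\cong\;\bigoplus_{w\in W_P}\ \bigoplus_{u\in U_{B,w}^-}\delta_{uw}\star(M\otimes V),
\end{equation*}
where $\delta_g\star N$ denotes $N$ with the $D^\cE$-action twisted by $g$. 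Because $D^\cE$ acts only through the $M$-tensor factor, this is isomorphic to $\bigoplus_{\gamma\in G/P}(\delta_\gamma\star M)^{\oplus\dim V}$ as a $D^\cE$-module.

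Next I would check that each $\delta_\gamma\star M$ is simple over $D^\cE$ (clear, since $M$ is) and satisfies $\End_{D^\cE}(\delta_\gamma\star M)=\bbF$ — the weight-$\lambda$ line of the highest weight module $M=L(\lambda)$ is one-dimensional and generates $M$, so an endomorphism, acting by a scalar on that line, equals that scalar — and, crucially, that the $\delta_\gamma\star M$, $\gamma\in G/P$, are pairwise non-isomorphic. For the last point I would argue exactly as in Theorem~\ref{thm_irred}: an isomorphism $\delta_g\star M\cong M$ reduces, on writing $g=u^{-1}w$, to $\delta_w\star M\cong M$; writing $P=P_I$ with $I=\{\alpha\in\Delta\mid\langle\lambda,\alpha^\vee\rangle\in\bbZ_{\ge0}\}$, if $w\notin W_I=W_P$ pick $\beta\in\Phi^+\setminus\Phi^+_I$ with $w^{-1}\beta<0$ and $0\neq y\in\frg_{-\beta}\subset\fru^-_\frp$; then $\pi^\cE(y^{[n]})$ annihilates the highest weight vector of $\delta_w\star M$ for every $n$ (because $\Ad(w^{-1})(y^{[n]})$ lies in a positive root space and kills the highest weight vector of $M$), so $\pi^\cE(y^{[n]})$ would have to act locally finitely on $M$, contradicting the $D^\cE$-analogue of Proposition~\ref{notlocnilp}. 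That analogue I would establish by the same induction on $\mathrm{ht}(\gamma)$ that proves Proposition~\ref{notlocnilp}: the crystalline operators $\pi^\cE(x_\alpha^{[n]}),\pi^\cE(y_\gamma^{[n]})$ and the commutator identities of Lemma~\ref{lemma1} are available inside $D^\cE$, and the extra differential operators $T_{(a,b)}^{p-1}L_{(a,b)}^{[p]}$ (already exploited to show $\tilde{H}^i_{\bbP^j}(\bbP^d,\cO)\cong L(s_i\cdots s_1\cdot 0)$) supply the nominators not reachable from $\dUg$, so the height induction closes.

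Granting these two steps, let $U$ be a non-zero $\cA^\cE_G$-submodule of $\cF^G_P(M,V)$; it is in particular a non-zero, $G$-stable, $D^\cE$-submodule, so by the pairwise non-isomorphy of the $\delta_\gamma\star M$ and $\End_{D^\cE}=\bbF$ it decomposes along its $D^\cE$-isotypic components as $U=\bigoplus_{\gamma\in G/P}(\delta_\gamma\star M)\otimes V_\gamma$ for subspaces $V_\gamma\subseteq V$. Its $\gamma=P$ summand $M\otimes V_1\subseteq M\otimes V$ is stable under $\mathrm{Stab}_G(P)=P$ and under $D^\cE$, hence is a submodule over $\cA^\cE_P:=\bbF[P]\# D^\cE$; but $M\otimes V$ is simple over $\cA^\cE_P$ — the simple $D^\cE$-module $M$ carries its compatible algebraic $P$-action and $V$ is a simple $P$-module, so the tensor-product argument from the closing lines of the proof of Theorem~\ref{thm_irred_V} applies — whence $V_1\in\{0,V\}$. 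Since $G$ acts transitively on $G/P$ and permutes the isotypic components accordingly, all $V_\gamma$ coincide with $V_1$; as $U\neq0$ we get $V_1=V$ and therefore $U=\cF^G_P(M,V)$.

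The main obstacle is exactly the one met in the $p$-adic and crystalline cases: the non-local-finiteness statement forcing $w\in W_P$, i.e.\ the $D^\cE$-analogue of Proposition~\ref{notlocnilp}, whose height induction has to be re-examined case by case because the supply of differential operators in $D^\cE$ differs from that in $\pi^\cE(\dUg)$. On the other hand $D^\cE$ carries strictly more operators, which should make the induction easier and plausibly allow one to drop the hypothesis $p>3$ present in the first approach. The remaining ingredients — exactness and transitivity of $\cF^G_P$, the Bruhat decomposition of $\Ind^G_P$, and the simplicity of $M\otimes V$ over $\cA^\cE_P$ — are formal.
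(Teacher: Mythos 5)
Your proposal is correct and follows essentially the same route as the paper, whose own proof simply says to transfer the strategy of Theorems \ref{thm_irred} and \ref{thm_irred_V} and to note that Proposition \ref{notlocnilp} remains valid for the simple objects $L(\lambda)$ of $\cO^P_{D^\cE,\alg}$ as avatars of their characteristic-zero versions. You spell out the Bruhat/twisted-summand decomposition, the pairwise non-isomorphy via the non-local-finiteness statement, and the final isotypic argument with $V$ in more detail than the paper does, but the underlying argument is the same.
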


\begin{proof}
The proof follows the strategy of Theorems \ref{thm_irred} and \ref{thm_irred_V}. Note that Proposition \ref{notlocnilp}  does also hols true for our objects $L(\lambda)$ as avatars of their character zero versions. 
\end{proof}

\end{document}